\documentclass[10pt]{article}
\usepackage{cite}
\usepackage[utf8]{inputenc}
\usepackage{geometry}
\geometry{letterpaper}
\geometry{margin=1in}
\usepackage{graphicx}
\usepackage{booktabs}
\usepackage{array}
\usepackage{paralist}
\usepackage{verbatim}
\usepackage{subfig}
\usepackage{url}
\usepackage{amsmath,amsthm,amssymb,enumerate}
\usepackage{fancyhdr}
\pagestyle{fancy}

\lhead{}\chead{}\rhead{}
\lfoot{}\cfoot{\thepage}\rfoot{}
\usepackage{sectsty}
\usepackage[nottoc,notlof,notlot]{tocbibind}
\usepackage[titles,subfigure]{tocloft}

\newtheorem{thm}{Theorem}[section]
\newtheorem{cor}[thm]{Corollary}
\newtheorem{lem}[thm]{Lemma}
\newtheorem{prop}[thm]{Proposition}

\theoremstyle{remark}

\theoremstyle{definition}
\newtheorem{define}[thm]{Definition}

\title{The Theory of Pseudo-differential Operators
on the Noncommutative $n$-Torus}
\author{Jim Tao}
\date{\today}
\begin{document}
\maketitle

\section{Introduction}
The methods of spectral geometry are useful for investigating
the metric aspects of noncommutative geometry
\cite{ncdg,ncg,cmindex,ncgqfm} and in these contexts require extensive use
of pseudo-differential operators. In the foundational paper \cite{ccstar},
Connes showed that, by direct analogy with the theory
\cite{gilkey,raymond,wong} of pseudo-differential operators on $\mathbb R^n$,
one may derive a similar pseudo-differential calculus on noncommutative $n$
tori $\mathbb T_{\theta}^n$. With the development of this calculus
came many results concerning the local differential geometry of noncommutative
tori for $n=2,4$, as shown in the groundbreaking paper \cite{ct}
in which the Gauss--Bonnet theorem on $\mathbb T_{\theta}^2$ is proved
and later papers \cite{cmcurvature,fkgb,fk2t,fk4t,fkwc}.
In these papers, the flat geometry of $\mathbb T_{\theta}^n$ which was studied
in \cite{ccstar} is conformally perturbed using a Weyl factor given
by a positive invertible smooth element in $C^{\infty}(\mathbb T_{\theta}^n)$.
Connes' pseudodifferential calculus is critically used to apply heat
kernel techniques to geometric operators on $\mathbb T_{\theta}^n$ to
derive small time heat kernel expansions that encode local geometric
information such as scalar curvature. As discovered in
\cite{cmcurvature, fk2t, fk4t}, a purely noncommutative feature that
appears in the computations and in the final formula for the curvature
is the modular automorphism of the state implementing the conformal
perturbation of the metric.

Certain details of the proofs in the derivation of the calculus in
\cite{ccstar} were omitted, such as the evaluation of oscillatory integrals,
so we make it the objective of this paper to fill in all the details. After
reproving in more detail the formula for the symbol of the adjoint of a
pseudo-differential operator and the formula for the symbol of a product of
two pseudo-differential operators, we define the corresponding analog of
Sobolev spaces for which we prove the Sobolev and Rellich lemmas. We then
extend these results to finitely generated projective right modules over the
noncommutative $n$ torus.

We list these results below.
\begin{thm}
Suppose $P$ is a pseudodifferential operator with symbol
$\sigma(P)=\rho=\rho(\xi)$ of order $M$.
Then the symbol of the adjoint $P^*$ is of order $M$ and satisfies
$$\sigma(P^*)\sim\sum_{\ell\in\mathbb Z_{\ge 0}^n}
\frac{\partial^{\ell}\delta^{\ell}[(\rho(\xi))^*]}
{\ell_1!\ell_2!}.$$
\end{thm}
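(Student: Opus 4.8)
The plan is to compute an \emph{exact} formula for $\sigma(P^{*})$ first and then read off its asymptotic expansion by a Taylor expansion in $\xi$. Write $P=P_{\rho}$ for the operator attached to $\rho$, normalized so that $P_{\rho}(U^{m})=\rho(m)U^{m}$, equivalently $P_{\rho}(a)=(2\pi)^{-n}\iint e^{-is\cdot\xi}\rho(\xi)\alpha_{s}(a)\,ds\,d\xi$, and let $\langle a,b\rangle=\tau(b^{*}a)$ be the inner product coming from the canonical trace $\tau$. I would start from
$$\langle P_{\rho}a,b\rangle=\frac{1}{(2\pi)^{n}}\iint e^{-is\cdot\xi}\,\tau(b^{*}\rho(\xi)\alpha_{s}(a))\,ds\,d\xi$$
and push $a$ to the far right inside the trace: since $\tau$ is a trace invariant under each $\alpha_{s}$,
$$\tau(b^{*}\rho(\xi)\alpha_{s}(a))=\tau(\alpha_{-s}(b^{*})\,\alpha_{-s}(\rho(\xi))\,a).$$
By faithfulness of $\tau$ this gives $P_{\rho}^{*}b=Z^{*}$, where $Z=(2\pi)^{-n}\iint e^{-is\cdot\xi}\alpha_{-s}(b^{*})\alpha_{-s}(\rho(\xi))\,ds\,d\xi$; since each $\alpha_{s}$ is a $*$-automorphism, computing $Z^{*}$ and then replacing $s$ by $-s$ yields
$$P_{\rho}^{*}(b)=\frac{1}{(2\pi)^{n}}\iint e^{-is\cdot\xi}\,\alpha_{s}(\rho(\xi)^{*})\,\alpha_{s}(b)\,ds\,d\xi,$$
an operator of the same shape as $P_{\rho}$ but whose amplitude $\alpha_{s}(\rho(\xi)^{*})$ still depends on $s$. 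Expanding $\alpha_{s}(\rho(\xi)^{*})=\sum_{j\in\mathbb{Z}^{n}}e^{is\cdot j}\,[\rho(\xi)^{*}]_{j}\,U^{j}$ in Fourier modes, where $[x]_{j}$ denotes the $j$-th Fourier coefficient of $x\in C^{\infty}(\mathbb{T}_{\theta}^{n})$, then absorbing $e^{is\cdot j}$ into the phase and substituting $\xi\mapsto\xi+j$ in the $j$-th term, everything collects into $P_{\rho}^{*}=P_{q}$ with the \emph{exact} symbol
$$q(\xi)=\sum_{j\in\mathbb{Z}^{n}}[\rho(\xi+j)^{*}]_{j}\,U^{j}.$$

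The interchanges above — Fubini for the oscillatory integrals, the change of variables, termwise differentiation, and convergence of the $j$-sum to a smooth $C^{\infty}(\mathbb{T}_{\theta}^{n})$-valued function of $\xi$ — use only the symbol estimates for $\rho$, which force $[\rho(\eta)^{*}]_{j}$ to decay faster than any power of $|j|$ while growing at most polynomially in $|\eta|$; since $\|U^{j}\|=1$ the same estimates show that $q$, and every $\delta^{\alpha}\partial^{\beta}q$, satisfies the symbol estimates of order $M$, once one splits the sum into $|j|\le\tfrac12(1+|\xi|)$ and $|j|>\tfrac12(1+|\xi|)$. For the asymptotic expansion I would Taylor-expand in the first slot: for each $N\ge 1$,
$$\rho(\xi+j)^{*}=\sum_{|\ell|<N}\frac{j^{\ell}}{\ell!}\,\partial^{\ell}(\rho(\xi)^{*})+R_{N}(\xi,j),$$
with $R_{N}$ in integral remainder form. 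Taking the $j$-th Fourier coefficient of both sides and summing against $U^{j}$, the main terms become, using $\delta^{\ell}(U^{j})=j^{\ell}U^{j}$ and that $\partial$ and $\delta$ commute,
$$\sum_{|\ell|<N}\frac{1}{\ell!}\sum_{j\in\mathbb{Z}^{n}}j^{\ell}\,[\partial^{\ell}(\rho(\xi)^{*})]_{j}\,U^{j}=\sum_{|\ell|<N}\frac{\partial^{\ell}\delta^{\ell}[\rho(\xi)^{*}]}{\ell!},$$
which is the asserted partial sum. Here $\rho(\xi)^{*}$ is itself a symbol of order $M$ (since $\|x^{*}\|=\|x\|$ and $\delta^{\ell}(x^{*})=(-1)^{|\ell|}(\delta^{\ell}x)^{*}$), while $\partial^{\ell}$ lowers the order by $|\ell|$ and $\delta^{\ell}$ preserves it, so the $\ell$-th term has order $M-|\ell|$ and the sum is a genuine asymptotic series.

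What remains, and what I expect to be the main obstacle, is to show the remainder $r_{N}(\xi)=\sum_{j}[R_{N}(\xi,j)]_{j}\,U^{j}$ is a symbol of order $M-N$, together with all its $\delta^{\alpha}\partial^{\beta}$-derivatives. Since $R_{N}(\xi,j)$ is a finite combination of terms $j^{\ell}\int_{0}^{1}(1-t)^{N-1}\partial^{\ell}\rho(\xi+tj)^{*}\,dt$ with $|\ell|=N$, its $j$-th Fourier coefficient is, for every $K$, at most $C_{K}(1+|j|)^{N-K}\sup_{0\le t\le 1}(1+|\xi+tj|)^{M-N}$, and one must sum this over $j$ and recover a bound $C(1+|\xi|)^{M-N}$ uniform in $\xi$. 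I would handle this by the same dichotomy: when $|j|\le\tfrac12(1+|\xi|)$ one has $|\xi+tj|\gtrsim 1+|\xi|$, so the $\xi$-factor is $\lesssim(1+|\xi|)^{M-N}$ and the remaining sum $\sum_{j}(1+|j|)^{N-K}$ converges; when $|j|>\tfrac12(1+|\xi|)$ one bounds $(1+|\xi+tj|)^{M-N}$ by a fixed power of $|j|$, absorbs it into the rapidly decaying factor by choosing $K$ large enough (how large depending on $M$ and $N$, which is harmless because the Fourier decay beats every polynomial), and then estimates the resulting tail sum by a power of $1+|\xi|$. This yields $q$ of order $M$ with $q\sim\sum_{\ell}\partial^{\ell}\delta^{\ell}[\rho(\xi)^{*}]/\ell!$, which is the assertion.
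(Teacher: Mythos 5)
Your proposal is correct, and its skeleton matches the paper's more closely than you might expect: the exact symbol you derive, $q(\xi)=\sum_{j}[\rho(\xi+j)^{*}]_{j}U^{j}$, is (after reindexing $j=-m$ and absorbing phases) precisely the paper's closed form $\sigma(P_{\rho}^{*})(\xi)=\bigl[\sum_{m}\rho_{m}(\xi-m)\prod_{j}U_{j}^{m_{j}}\bigr]^{*}$, which the paper obtains by Fourier-expanding both $\rho$ and $b$ and evaluating the resulting oscillatory integrals with Proposition \ref{osc}; the Taylor-expansion step producing $\sum_{|\ell|<N}\partial^{\ell}\delta^{\ell}[\rho(\xi)^{*}]/\ell!$ is also the same. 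Where you genuinely diverge is the analytic heart, the proof that the remainder lies in $S^{M-N}$ (and that $\sigma(P^{*})\in S^{M}$): the paper, following Gilkey, keeps the remainder inside the oscillatory integral and integrates by parts, using $(-D_{x})^{\ell}\alpha_{x}(a)=(-\delta)^{\ell}\alpha_{x}(a)$ to trade the powers $y^{\ell}$ for $\delta^{\ell}\partial^{\ell}\rho\in S^{M-N}$, and then concludes rather tersely; you instead work with the discrete series directly, trading powers of $j$ for $\delta$-derivatives via $j^{\alpha}[x]_{j}=\pm[\delta^{\alpha}x]_{j}$ (rapid Fourier decay uniform in $\xi$ with weight $(1+|\xi|)^{M-N}$) and summing with the dichotomy $|j|\le\tfrac12(1+|\xi|)$ versus $|j|>\tfrac12(1+|\xi|)$. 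The two mechanisms are the same in spirit---both convert lattice growth into smoothness in the algebra direction---but your version avoids oscillatory-integral manipulations in the remainder estimate and is more explicit and quantitative than the paper's final step, at the cost of having to justify once the interchange of the $j$-sum with the oscillatory integral (which you rightly note follows from the symbol estimates, or can be checked directly on the basis elements $\prod_{j}U_{j}^{k_{j}}$ as the paper effectively does). Two small points: on the region $|j|\le\tfrac12(1+|\xi|)$ your lower bound $1+|\xi+tj|\ge\tfrac12(1+|\xi|)$ only yields the desired bound when $M-N\le 0$; for $M-N>0$ use the matching upper bound $1+|\xi+tj|\le\tfrac32(1+|\xi|)$, which holds on the same region, so nothing breaks. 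And your denominator $\ell!=\ell_{1}!\cdots\ell_{n}!$ is the correct one; the $\ell_{1}!\ell_{2}!$ in the theorem statement is a leftover from the two-dimensional case.
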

\begin{thm}
Suppose that $P$ is a pseudodifferential operator with symbol
$\sigma(P)=\rho=\rho(\xi)$ of order $M_1$, and $Q$ is a pseudodifferential
operator with symbol $\sigma(Q)=\phi=\phi(\xi)$ of order $M_2$.
Then the symbol of the product $QP$ is of order $M_1+M_2$ and satisfies
$$\sigma(QP)\sim\sum_{\ell\in\mathbb Z_{\ge 0}^n}
\frac{1}{\ell_1!\ell_2!}\partial^{\ell}\phi(\xi)\delta^{\ell}\rho(\xi),$$
where $\partial^{\ell}:=\prod_j\partial_j^{\ell_j}$
and $\delta^{\ell}:=\prod_j\delta_j^{\ell_j}$.
\end{thm}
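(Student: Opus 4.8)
The plan is to compose the defining oscillatory integrals of $Q$ and $P$, reduce the resulting four-fold integral to a single symbol-type integral for $\sigma(QP)$, and then read off the asymptotic expansion by Taylor expanding $\phi$ about $\xi$. Write $\alpha$ for the periodic $\mathbb R^n$-action on $C^\infty(\mathbb T^n_\theta)$ entering the definition of a pseudodifferential operator, so that $P(a)=(2\pi)^{-n}\int\!\int e^{-is\cdot\xi}\rho(\xi)\,\alpha_s(a)\,ds\,d\xi$ for smooth $a$, and similarly for $Q$ with $\phi$; since an operator of order $M_1$ preserves $C^\infty(\mathbb T^n_\theta)$, the composite $QP(a)=Q(P(a))$ is defined for $a\in C^\infty(\mathbb T^n_\theta)$. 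Substituting the integral for $P(a)$ into the one for $Q$ and using that each $\alpha_t$ is an algebra automorphism, that the exponentials are scalar, and that $\alpha_t\alpha_s=\alpha_{t+s}$, one gets
$$QP(a)=\frac{1}{(2\pi)^{2n}}\int\!\int\!\int\!\int e^{-i(t\cdot\eta+s\cdot\xi)}\,\phi(\eta)\,\alpha_t(\rho(\xi))\,\alpha_{t+s}(a)\,ds\,d\xi\,dt\,d\eta .$$
The change of variables $u=t+s$ (with $t$ held fixed) converts the $s$-phase into $e^{-iu\cdot\xi}e^{it\cdot\xi}$, and then $\eta=\xi+\zeta$, together with grouping the $t$- and $\zeta$-integrals, brings this to the form $QP(a)=(2\pi)^{-n}\int\!\int e^{-iu\cdot\xi}\,\tau(\xi)\,\alpha_u(a)\,du\,d\xi$ with
$$\tau(\xi)=\frac{1}{(2\pi)^n}\int\!\int e^{-i\zeta\cdot t}\,\phi(\xi+\zeta)\,\alpha_t(\rho(\xi))\,dt\,d\zeta .$$
Comparison with the definition identifies $QP$, once $\tau$ is shown to be a symbol, as a pseudodifferential operator with $\sigma(QP)=\tau$.

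\emph{Extracting the expansion.} Fix $N$ and write $\phi(\xi+\zeta)=\sum_{|\ell|<N}\frac{\zeta^{\ell}}{\ell!}(\partial^{\ell}\phi)(\xi)+R_N(\xi,\zeta)$, where $\ell!=\prod_j\ell_j!$ and $R_N$ is the order-$N$ Taylor remainder, a finite sum of terms proportional to $\zeta^{\ell}\int_0^1(1-r)^{N-1}(\partial^{\ell}\phi)(\xi+r\zeta)\,dr$ with $|\ell|=N$. Inserting this in the formula for $\tau$ splits it into finitely many main terms plus an error term $E_N$. For a monomial $\zeta^{\ell}$ one uses $\zeta^{\ell}e^{-i\zeta\cdot t}=i^{|\ell|}\partial_t^{\ell}e^{-i\zeta\cdot t}$ and integrates by parts in $t$ so that the $t$-derivatives fall on $\alpha_t(\rho(\xi))$; by the relation between $t$-derivatives of $\alpha_t(\cdot)$ and the derivations $\delta_j$ (in the paper's normalization $\partial_{t_j}\alpha_t=i\,\alpha_t\circ\delta_j$, giving $\partial_t^{\ell}[\alpha_t(\rho(\xi))]=i^{|\ell|}\alpha_t(\delta^{\ell}\rho(\xi))$) all powers of $i$ and signs cancel, and the leftover $\zeta$-integral $(2\pi)^{-n}\int e^{-i\zeta\cdot t}\,d\zeta$ acts as the Dirac mass at $t=0$, collapsing the $t$-integral. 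The $\ell$-th main term is therefore exactly $\frac{1}{\ell!}(\partial^{\ell}\phi)(\xi)\,\delta^{\ell}\rho(\xi)$, so
$$\tau(\xi)=\sum_{|\ell|<N}\frac{1}{\ell!}(\partial^{\ell}\phi)(\xi)\,\delta^{\ell}\rho(\xi)+E_N(\xi),\qquad E_N(\xi)=\frac{1}{(2\pi)^n}\int\!\int e^{-i\zeta\cdot t}\,R_N(\xi,\zeta)\,\alpha_t(\rho(\xi))\,dt\,d\zeta .$$

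\emph{The remainder estimate.} It remains to show $E_N$ is a symbol of order $M_1+M_2-N$; since this holds for every $N$ and the $\ell=0$ term $\phi(\xi)\rho(\xi)$ has order $M_1+M_2$, this yields both the stated asymptotic expansion and the fact that $QP$ has order $M_1+M_2$. The obstruction is that the integral for $E_N$ is not absolutely convergent: $\alpha_t(\rho(\xi))$ has bounded but non-decaying norm in $t$, while $R_N(\xi,\zeta)$ grows polynomially in $\zeta$. I would regularize in the usual way, inserting $e^{-i\zeta\cdot t}=(1+|\zeta|^2)^{-k}(1-\Delta_t)^k e^{-i\zeta\cdot t}$ and $e^{-i\zeta\cdot t}=(1+|t|^2)^{-l}(1-\Delta_\zeta)^l e^{-i\zeta\cdot t}$ and integrating by parts in $t$ and in $\zeta$, using $(1-\Delta_t)^k\alpha_t(\rho(\xi))=\alpha_t\bigl((1+\sum_j\delta_j^2)^k\rho(\xi)\bigr)$ and that $\zeta$-differentiation only improves the decay of the $R_N$-factor; for $k,l$ large (depending on $N$, $n$, $M_1$, $M_2$) the regularized integrand is absolutely integrable, which retroactively legitimizes the manipulations above. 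To read off the order in $\xi$, note that the $R_N$-factor carries $(\partial^{\ell}\phi)(\xi+r\zeta)$ with $|\ell|=N$, hence is $O\bigl((1+|\xi+r\zeta|)^{M_2-N}\bigr)$, which by Peetre's inequality is $O\bigl((1+|\xi|)^{M_2-N}(1+|\zeta|)^{|M_2-N|}\bigr)$, the surplus $\zeta$-powers being absorbed into $k$; and the $\rho$-factor carries $\|(1+\sum_j\delta_j^2)^k\rho(\xi)\|=O\bigl((1+|\xi|)^{M_1}\bigr)$. Multiplying gives $\|E_N(\xi)\|\le C(1+|\xi|)^{M_1+M_2-N}$, and differentiating under the integral in $\xi$ (each $\partial_\xi$ costing one power) and applying the $\delta_j$ (which commute with the $\alpha_t$) reproduces the same kind of bounds for every $\partial_\xi^{\beta}\delta^{\gamma}E_N(\xi)$ --- which is precisely the symbol estimate of order $M_1+M_2-N$.

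The main obstacle is this last step: arranging the oscillatory integrals so that the interchanges of integration, the changes of variables, the integration by parts, and the use of the Dirac mass are all rigorous, and then tracking the symbol seminorms of $\phi$ and $\rho$ through the regularized remainder carefully enough to land on the order $M_1+M_2-N$. The earlier parts of the calculus developed in this paper --- that oscillatory integrals of symbol-type amplitudes are well defined, and that pseudodifferential operators preserve $C^\infty(\mathbb T^n_\theta)$ --- are what make these steps legitimate.
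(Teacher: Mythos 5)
Your proposal is correct and follows the same computational skeleton as the paper's proof: compose the two oscillatory integrals, change variables (your $(u,\zeta)=(s+t,\eta-\xi)$ is the paper's $(x,y)=(s,s+t)$, $(\sigma,\tau)=(\xi-\eta,\eta)$), Taylor expand the outer symbol about $\xi$, convert the monomial in the covariable into $\delta^{\ell}$ acting on $\rho$ via the identity $D_t^{\ell}\alpha_t(\cdot)=\delta^{\ell}\alpha_t(\cdot)$ together with Proposition \ref{osc}, and show the Taylor remainder lies in $S^{M_1+M_2-N}$. The one genuine difference is the device used to tame convergence: the paper never composes $P_{\phi}\circ P_{\rho}$ in one stroke but first cuts $\phi$ with Wong's partition of unity $\{\varphi_k\}$, computes $\sigma(P_{\phi_k}\circ P_{\rho})=\mu_k$ for each compactly supported piece $\phi_k=\phi\varphi_k$, and then sums $\mu=\sum_k\mu_k$ using that $\sum_kP_{\phi_k}(a)=P_{\phi}(a)$ converges absolutely and uniformly; you instead keep $\phi$ intact and justify the interchanges, integration by parts, and the delta-mass collapse by the standard regularization $(1+|\zeta|^2)^{-k}(1-\Delta_t)^k$ and $(1+|t|^2)^{-l}(1-\Delta_\zeta)^l$ together with Peetre's inequality for $(\partial^{\ell}\phi)(\xi+r\zeta)$. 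Both routes are legitimate: the partition of unity buys elementary convergence for each piece at the cost of an extra summation and of checking the expansion survives the sum over $k$, while your regularization stays closer to the oscillatory-integral formalism the paper imports from \cite{raymond}, and your remainder estimate (Peetre, explicit absorption of the $\zeta$-growth into the regularizing weights, $\delta$-derivatives not affecting the order of $\rho$) is in fact spelled out in more detail than the paper's rather terse closing paragraph. The only caveat is that your final step is presented as a plan rather than a completed estimate, but it is the standard argument and the paper's own treatment of the same step is no more detailed.
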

\begin{thm}
For $s>k+1$, $H^s\subseteq A_{\theta}^k$.
\end{thm}
\begin{thm}
Let $\{a_N\}\in A_{\theta}^{\infty}$ be a sequence. Suppose that there
is a constant $C$ so that $||a_N||_s\le C$ for all $N$. Let $s>t$. Then
there is a subsequence $\{a_{N_j}\}$ that converges in $H^t$.
\end{thm}
\begin{thm}
\begin{enumerate}[(a)]
\item For a pseudodifferential operator $P$ with $r\times r$ matrix valued
symbol $\sigma(P)=\rho=\rho(\xi)$, the
symbol of the adjoint $P^*$ satisfies
$$\sigma(P^*)\sim\sum_{(\ell_1,\ldots,\ell_n)\in(\mathbb Z_{\ge 0})^n}
\frac{\partial_1^{\ell_1}\cdots\partial_n^{\ell_n}
\delta_1^{\ell_1}\cdots\delta_n^{\ell_n}(\rho(\xi))^*}{\ell_1!\cdots\ell_n!}.$$
\item If $Q$ is a pseudodifferential operator with $r\times r$ matrix valued symbol
$\sigma(Q)=\rho'=\rho'(\xi)$,
then the product $PQ$ is also a pseudodifferential operator and has symbol
$$\sigma(PQ)\sim\sum_{(\ell_1,\ldots,\ell_n)\in(\mathbb Z_{\ge 0})^n}
\frac{\partial_1^{\ell_1}\cdots\partial_n^{\ell_n}(\rho(\xi))
\delta_1^{\ell_1}\cdots\delta_n^{\ell_n}(\rho'(\xi))}{\ell_1!\cdots\ell_n!}.$$
\end{enumerate}
\end{thm}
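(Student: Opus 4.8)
The plan is to reduce everything to the scalar statements already established in Theorems 1.1 and 1.2 by working entry by entry. One identifies a pseudodifferential operator $P$ with $r\times r$ matrix-valued symbol $\rho=(\rho_{ij})$ acting on the free module of rank $r$ with the matrix $(P_{ij})$ of scalar pseudodifferential operators, where $P_{ij}$ has symbol $\rho_{ij}$ and $(Pa)_i=\sum_j P_{ij}a_j$; the relevant inner product on the rank-$r$ module splits as $\langle a,b\rangle=\sum_{i=1}^r\langle a_i,b_i\rangle$. The order of $P$ is the maximum of the orders of the $\rho_{ij}$, and the relation ``$\sim$'' between matrix-valued symbols is to be read entrywise (equivalently, in terms of the order of the matrix of remainders). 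The first step is to record these conventions carefully and check that they are consistent with the scalar definitions, so that the problem genuinely decomposes over matrix entries.

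For part (a): from $(Pa)_i=\sum_j P_{ij}a_j$ and bilinearity of the inner product, $\langle Pa,b\rangle=\sum_{i,j}\langle P_{ij}a_j,b_i\rangle=\sum_{i,j}\langle a_j,P_{ij}^*b_i\rangle$, so $(P^*b)_j=\sum_i P_{ij}^*b_i$; that is, the operator matrix of $P^*$ is $((P^*)_{ji})=(P_{ij}^*)$, the conjugate transpose at the level of operators. Applying the scalar adjoint formula (Theorem 1.1) to each entry gives $\sigma((P^*)_{ji})=\sigma(P_{ij}^*)\sim\sum_{\ell}\frac{\partial^{\ell}\delta^{\ell}(\rho_{ij})^*}{\ell_1!\cdots\ell_n!}$. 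Since the matrix adjoint $(\rho(\xi))^*$ has $(j,i)$ entry $(\rho_{ij}(\xi))^*$, and the derivations $\partial_k,\delta_k$ act entrywise on matrices, summing these scalar expansions over all $i,j$ assembles precisely into $\sigma(P^*)\sim\sum_{\ell}\frac{\partial_1^{\ell_1}\cdots\partial_n^{\ell_n}\delta_1^{\ell_1}\cdots\delta_n^{\ell_n}(\rho(\xi))^*}{\ell_1!\cdots\ell_n!}$, with the order statement inherited from the scalar case.

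For part (b): the composite $PQ$ has operator matrix $(PQ)_{ik}=\sum_j P_{ij}Q_{jk}$, a finite sum of scalar pseudodifferential operators, hence is itself a pseudodifferential operator with symbol $\sigma((PQ)_{ik})=\sum_j\sigma(P_{ij}Q_{jk})$. Applying the scalar product formula (Theorem 1.2) to each term --- the outer factor $P_{ij}$, with symbol $\rho_{ij}$, carrying the $\partial^{\ell}$ and the inner factor $Q_{jk}$, with symbol $\rho'_{jk}$, carrying the $\delta^{\ell}$ --- gives $\sigma(P_{ij}Q_{jk})\sim\sum_{\ell}\frac{1}{\ell_1!\cdots\ell_n!}\partial^{\ell}(\rho_{ij})\,\delta^{\ell}(\rho'_{jk})$. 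Summing over $j$ recognizes $\sum_j\partial^{\ell}(\rho_{ij})\delta^{\ell}(\rho'_{jk})$ as the $(i,k)$ entry of the matrix product $\partial^{\ell}(\rho(\xi))\cdot\delta^{\ell}(\rho'(\xi))$ --- the order of the two matrix factors here is exactly the one $PQ$ dictates, which is why noncommutativity causes no trouble --- and collecting over $i,k$ yields the asserted expansion, with order $M_1+M_2$ from the scalar case.

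The reduction to the scalar case is the whole point, so there is no deep obstacle; the part that needs genuine care is the bookkeeping in the first paragraph: making the matrix/operator-matrix dictionary precise, verifying that the module inner product splits as a sum over components so that the adjoint really is the entrywise conjugate transpose of the operator matrix, and checking that ``order $M$'' and ``$\sim$'' for matrix-valued symbols are defined so that a finite collection of scalar estimates yields the matrix estimate. Once those conventions are pinned down, (a) and (b) follow immediately from Theorems 1.1 and 1.2.
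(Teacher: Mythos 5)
Your proposal is correct and takes essentially the same approach as the paper: the paper's proof also reduces both parts to the $r=1$ case entry by entry (for the adjoint, applying the scalar result to each entry of $\rho=(\rho_{j,k})$; for the product, to each summand over the middle index in $\sum_{\beta}\phi_k(\xi+y)_{\alpha,\beta}\,\alpha_x(\rho(\xi)_{\beta,\gamma})$). The only difference is organizational: the paper first repeats the oscillatory-integral manipulations with matrix-valued symbols and then invokes the entrywise reduction, whereas you reduce immediately through the operator-matrix dictionary $(P\vec a)_i=\sum_j P_{ij}a_j$ and the componentwise splitting of the inner product.
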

\begin{thm}
For $s>k+1$, $H^s\subseteq(A_{\theta}^k)^re$.
\end{thm}
\begin{thm}
Let $\{\vec{a}_N\}\in(A_{\theta}^{\infty})^re$ be a sequence. Suppose that there
is a constant $C$ so that $||\vec{a}_N||_s\le C$ for all $N$. Let $s>t$. Then
there is a subsequence $\{\vec{a}_{N_j}\}$ that converges in $H^t$.
\end{thm}

\section{Preliminaries}
Fix some skew symmetric $n\times n$ matrix $\theta$ with upper
triangular entries in $\mathbb R\backslash\mathbb Q$ that are linearly
independent over $\mathbb Q$. Consider the irrational rotation
$C^*$-algebra $A_{\theta}$ with $n$ unitary generators $U_1,\ldots,U_n$ which
satisfy $U_kU_j=e^{2\pi i\theta_{j,k}}U_jU_k$ and $U_j^*=U_j^{-1}$. Let
$\{\alpha_s\}_{s\in\mathbb R^n}$ be a $n$-parameter group of automorphisms
given by $\prod_jU_j^{m_j}\mapsto e^{is\cdot m}\prod_jU_j^{m_j}$. We define
the subalgebra $A_{\theta}^k$ of $C^k$ elements of $A_{\theta}$ to be those
$a\in A_{\theta}$ such that the mapping $\mathbb R^n\rightarrow A_{\theta}$
given by $s\mapsto\alpha_s(a)$ is $C^k$, and we define the subalgebra
$A_{\theta}^{\infty}$ of smooth elements of $A_{\theta}$ to be those
$a\in A_{\theta}$ such that the mapping $\mathbb R^n\rightarrow A_{\theta}$
given by $s\mapsto\alpha_s(a)$ is smooth. An alternative definition of the
subalgebra $A_{\theta}^{\infty}$ of smooth elements is the elements in
$A_{\theta}$ that can be expressed by an expansion of the form
$\sum_{m\in\mathbb Z^n}a_m\prod_jU_j^{m_j}$, where the sequence
$\{a_m\}_{m\in\mathbb Z^n}$ is in the Schwartz space $\mathcal S(\mathbb Z^n)$
in the sense that, for all $\alpha\in\mathbb Z^n$,
$$\sup_{m\in\mathbb Z^n}(\prod_j|m_j|^{\alpha_j}|a_m|)<\infty.$$
Define the trace $\tau:A_{\theta}\rightarrow\mathbb C$ by
$\tau(\prod_jU_j^{m_j})=0$ for $m_j$ not all zero
and $\tau(1)=1$ and define an inner product
$\langle\cdot,\cdot\rangle:A_{\theta}\times A_{\theta}\rightarrow\mathbb C$
by $\langle a,b\rangle=\tau(b^*a)$ with induced norm
$||\cdot||:A_{\theta}\rightarrow\mathbb R_{\ge 0}$.
Let $D_j=-i\partial_j$ and
define derivations $\delta_j$ by the relations $\delta_j(U_j)=U_j$ and
$\delta_j(U_k)=0$ for $j\ne k$. For convenience, denote
$\partial^{\ell}:=\prod_j\partial_j^{\ell_j}$,
$\delta^{\ell}:=\prod_j\delta_j^{\ell_j}$, and
$D^{\ell}:=\prod_jD_j^{\ell_j}$.
We define a map $\psi:\rho\mapsto P_{\rho}$ assigning a pseudo-differential
operator on $A_{\theta}^{\infty}$ to a symbol
$\rho\in C^{\infty}(\mathbb R^n,A_{\theta}^{\infty})$.
\begin{define}
For $\rho\in C^{\infty}(\mathbb R^n,A_{\theta}^{\infty})$,
let $P_{\rho}$ be the pseudo-differential operator
sending arbitrary $a\in A_{\theta}^{\infty}$ to
$$P_{\rho}(a):=\frac{1}{(2\pi)^n}
\int_{\mathbb R^n}\!\int_{\mathbb R^n}\!
e^{-is\cdot\xi}\rho(\xi)\alpha_s(a)\,\mathrm ds\,\mathrm d\xi.$$
\end{define}
The integral above does not converge absolutely; it is an oscillatory integral.
We define oscillatory integrals below as in \cite{raymond}.
\begin{define}
Let $q$ be a nondegenerate real quadratic form on $\mathbb R^n$,
$a$ be a $C^{\infty}$ complex-valued function
defined on $\mathbb R^n$ such that the functions
$(1+|x|^2)^{-m/2}\partial^{\alpha}a(x)$ are bounded on $\mathbb R^n$
for all $\alpha\in\mathbb Z_{\ge 0}^n$, and $\varphi$ be a Schwartz function,
i.e. the functions $x^{\alpha}\partial^{\beta}\varphi(x)$ are bounded on
$\mathbb R^n$ for all pairs $\alpha,\beta\in\mathbb Z_{\ge 0}^n$. Suppose
further that $\varphi(0)=1$. Then the limit
$$\lim_{\epsilon\rightarrow 0}
\int\!e^{iq(x)}a(x)\varphi(\epsilon x)\,\mathrm dx$$
exists, is independent of $\varphi$ (as long as $\varphi(0)=1$), and is
equal to $\int\!e^{iq(x)}a(x)\,\mathrm dx$ when $a\in L^1$. When $a\not\in L^1$,
we continue to denote this limit by $\int\!e^{iq(x)}a(x)\,\mathrm dx$,
and have an estimate
$$\left|\int\!e^{iq(x)}a(x)\,\mathrm dx\right|
\le C_{q,m}\max_{|\alpha|\le m+n+1}\inf\{U\in\mathbb R:
|(1+|x|^2)^{-m/2}\partial^{\alpha}a|\le U\text{ almost everywhere}\}$$
where $C_{q,m}$ depends only on the quadratic form $q$ and the order $m$.
\end{define}
As shown in \cite{raymond}, oscillatory integrals behave essentially like
absolutely convergent integrals in that one can still make changes of
variables, integrate by parts, differentiate under the integral sign, and
interverse integral signs. Given certain conditions on $\rho$, $P_{\rho}(a)$
satisfies the conditions of the oscillatory integral, and we can evaluate
$P_{\rho}(a)$.
\begin{define}
An element $\rho=\rho(\xi)=\rho(\xi_1,\ldots,\xi_n)$ of
$C^{\infty}(\mathbb R^n,A_{\theta}^{\infty})$ is a symbol of order $m$ if and
only if for all non-negative integers $i_1,\ldots,i_n,j_1,\ldots,j_n$
$$||\delta_1^{i_1}\cdots\delta_n^{i_n}(\partial_1^{j_1}\cdots\partial_n^{j_n}
\rho)(\xi)||\le C_{\rho}(1+|\xi|)^{m-|j|}.$$
\end{define}
Example 2.6(i) of \cite{raymond} gives a convenient formula for evaluating
the oscillatory integrals that appear in our calculation of $P_{\rho}(a)$.
\begin{prop}\label{osc}
Suppose that, for some $m$, $a$ is a $C^{\infty}$ complex-valued function
defined on $\mathbb R^n$ such that the functions
$(1+|x|^2)^{-m/2}\partial^{\alpha}a(x)$ are bounded on $\mathbb R^n$
for all $\alpha\in\mathbb Z_{\ge 0}^n$.
Then
$$\frac{1}{(2\pi)^n}\int_{\mathbb R^n}\!\int_{\mathbb R^n}\!
e^{-iy\cdot\eta}a(y)\,\mathrm dy\,\mathrm d\eta
=\frac{1}{(2\pi)^n}\int_{\mathbb R^n}\!\int_{\mathbb R^n}\!
e^{-iy\cdot\eta}a(\eta)\,\mathrm dy\,\mathrm d\eta
=a(0).$$
\end{prop}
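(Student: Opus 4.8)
The plan is to use the key feature of oscillatory integrals recorded in the definition above (and proved in \cite{raymond}): the regularized limit $\lim_{\epsilon\to 0}\int e^{iq(x)}a(x)\varphi(\epsilon x)\,\mathrm dx$ does not depend on the choice of Schwartz cutoff $\varphi$ with $\varphi(0)=1$, so we are free to evaluate it with a Gaussian. First I would check that the double integral in question really is an oscillatory integral in that sense: the phase $q(y,\eta)=-y\cdot\eta$ is a nondegenerate quadratic form on $\mathbb R^{2n}$, and, after enlarging $m$ to $\max(m,0)$ if necessary, the function $(y,\eta)\mapsto a(y)$ is an admissible amplitude of order $m$ on $\mathbb R^{2n}$: the quantity $(1+|y|^2+|\eta|^2)^{-m/2}\partial^{\alpha}_{(y,\eta)}[a(y)]$ vanishes as soon as $\alpha$ differentiates in $\eta$, and is otherwise dominated by $(1+|y|^2)^{-m/2}\partial^{\alpha}_{y}a(y)$, which is bounded by hypothesis.

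Next, take $\varphi(y,\eta)=e^{-(|y|^2+|\eta|^2)/2}$, which is Schwartz on $\mathbb R^{2n}$ with $\varphi(0,0)=1$. For each fixed $\epsilon>0$ the integrand $e^{-iy\cdot\eta}a(y)e^{-\epsilon^2(|y|^2+|\eta|^2)/2}$ is absolutely integrable on $\mathbb R^{2n}$ (the polynomial growth of $a$ is beaten by the Gaussian decay), so Fubini lets us integrate in $\eta$ first. Completing the square gives
$$\frac{1}{(2\pi)^n}\int_{\mathbb R^n}e^{-iy\cdot\eta}e^{-\epsilon^2|\eta|^2/2}\,\mathrm d\eta=\frac{1}{(2\pi)^{n/2}\epsilon^n}e^{-|y|^2/(2\epsilon^2)},$$
a Gaussian of total mass $1$ that concentrates at the origin as $\epsilon\to 0$. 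Hence the regularized integral equals $\frac{1}{(2\pi)^{n/2}\epsilon^n}\int_{\mathbb R^n}a(y)e^{-\epsilon^2|y|^2/2}e^{-|y|^2/(2\epsilon^2)}\,\mathrm dy$, and the substitution $y=\epsilon z$ turns this into $(2\pi)^{-n/2}\int_{\mathbb R^n}a(\epsilon z)e^{-\epsilon^4|z|^2/2}e^{-|z|^2/2}\,\mathrm dz$. By dominated convergence — dominating function $C(1+|z|^2)^{m/2}e^{-|z|^2/2}$ for $\epsilon\le 1$, continuity of $a$ at $0$ — this tends to $(2\pi)^{-n/2}a(0)\int_{\mathbb R^n}e^{-|z|^2/2}\,\mathrm dz=a(0)$, which is the first equality.

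For the second equality I would note that the kernel $e^{-iy\cdot\eta}$ and the Gaussian cutoff are both symmetric under interchange of $y$ and $\eta$, so relabeling the integration variables reduces the claim $\frac{1}{(2\pi)^n}\int\!\int e^{-iy\cdot\eta}a(\eta)\,\mathrm dy\,\mathrm d\eta=a(0)$ to the case already handled; alternatively one reruns the computation integrating in $y$ first. I do not anticipate a genuine obstacle: the only steps requiring care are verifying that $a$ qualifies as an amplitude on $\mathbb R^{2n}$, so that the $\varphi$-independence of the oscillatory integral may legitimately be invoked, and justifying the interchange of limit and integral, which the Gaussian regularizer reduces to the routine dominated-convergence estimate above.
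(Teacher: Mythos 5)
Your argument is correct. Note, though, that the paper itself does not prove this proposition at all: it is quoted verbatim as Example 2.6(i) of the cited reference \cite{raymond}, so there is no in-paper proof to compare against. What you supply is a complete, self-contained derivation that uses only what the paper's own definition of the oscillatory integral provides, namely independence of the Schwartz cutoff $\varphi$ with $\varphi(0)=1$: you verify that $(y,\eta)\mapsto a(y)$ is an admissible amplitude of order $\max(m,0)$ for the nondegenerate phase $q(y,\eta)=-y\cdot\eta$ on $\mathbb R^{2n}$ (the enlargement of $m$ is the right fix, since the hypothesis for $m<0$ implies it for $0$), then evaluate with the Gaussian cutoff, where Fubini, the Gaussian Fourier transform, the rescaling $y=\epsilon z$, and dominated convergence with dominating function $C(1+|z|^2)^{m/2}e^{-|z|^2/2}$ all check out, and the $y\leftrightarrow\eta$ symmetry legitimately disposes of the second equality because the oscillatory integral is defined as a single regularized integral over $\mathbb R^{2n}$, not as an iterated one. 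What the citation route buys instead is the general machinery of \cite{raymond} (in particular the quantitative estimate in the paper's Definition 2.2 and its applicability to more general amplitudes), none of which your direct computation needs; conversely, your proof makes the paper more self-contained, which is in the spirit of its stated goal of filling in omitted details.
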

We apply Proposition \ref{osc} to get a basic result.
\begin{lem}\label{opn}
Let $a=\sum_{m\in\mathbb Z^n}a_m\prod_jU_j^{m_j}$
be an arbitrary element of $A_{\theta}^{\infty}$ and let
$\rho\in C^{\infty}(\mathbb R^n,A_{\theta}^{\infty})$ be a symbol of order $M$.
Then $$P_{\rho}(a)=\sum_{m\in\mathbb Z^n}\rho(m)a_m\prod_{j=1}^nU_j^{m_j}.$$
\end{lem}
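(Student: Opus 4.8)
The plan is to substitute the Fourier expansion $a=\sum_{m\in\mathbb Z^n}a_m\prod_jU_j^{m_j}$ into the defining oscillatory integral for $P_\rho(a)$ and reduce the computation, term by term, to an application of Proposition \ref{osc}. First I would record the action of the automorphism group on a monomial: since $\alpha_s\bigl(\prod_jU_j^{m_j}\bigr)=e^{is\cdot m}\prod_jU_j^{m_j}$, linearity (and continuity, to justify exchanging the sum with the oscillatory integral) gives
$$P_\rho(a)=\frac{1}{(2\pi)^n}\sum_{m\in\mathbb Z^n}\int_{\mathbb R^n}\!\int_{\mathbb R^n}\!e^{-is\cdot\xi}e^{is\cdot m}\rho(\xi)a_m\prod_jU_j^{m_j}\,\mathrm ds\,\mathrm d\xi.$$
For each fixed $m$, I would change variables $\xi\mapsto\xi+m$ (permissible for oscillatory integrals) so that $e^{-is\cdot\xi}e^{is\cdot m}=e^{-is\cdot\xi'}$ with $\xi'=\xi-m$, turning the inner double integral into $\int\!\int e^{-is\cdot\xi'}\rho(\xi'+m)a_m\prod_jU_j^{m_j}\,\mathrm ds\,\mathrm d\xi'$; the claimed value $\rho(m)a_m\prod_jU_j^{m_j}$ is then exactly Proposition \ref{osc} applied to the $A_\theta^\infty$-valued function $y=s$, $\eta=\xi'$, with $a(\cdot)=\rho(\cdot+m)a_m\prod_jU_j^{m_j}$ evaluated at $0$.

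The remaining work is to check the hypotheses so that each step is legitimate. I would verify that $y\mapsto\rho(y+m)a_m\prod_jU_j^{m_j}$, or rather the scalar-valued function obtained after pairing against an arbitrary element via $\langle\cdot,\cdot\rangle$ (or after choosing a faithful representation), satisfies the polynomial-growth bound $(1+|y|^2)^{-m'/2}\partial^\alpha a(y)$ bounded: this follows directly from the order-$M$ symbol estimate in Definition 2.5, taking $m'=M$, since $\partial^\alpha\rho(y+m)$ is controlled by $C_\rho(1+|y+m|)^{M-|\alpha|}\le C_\rho'(1+|y|)^{M}$ for fixed $m$. Because Proposition \ref{osc} is an identity of complex numbers, I would either run the argument coordinate-wise against the trace pairing $b\mapsto\tau(b^*\,\cdot\,)$ (the norm on $A_\theta$ being the one induced by $\langle\cdot,\cdot\rangle$) and then reconstruct the element, or simply note that the oscillatory integral of an $A_\theta^\infty$-valued function is defined entrywise/componentwise and Proposition \ref{osc} applies in that vector-valued sense with no change.

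The main obstacle I anticipate is the interchange of the infinite sum over $m$ with the oscillatory integral. Unlike absolutely convergent integrals, one cannot invoke dominated convergence directly; instead I would use the estimate at the end of Definition 2.3, which bounds $\bigl|\int e^{iq(x)}a(x)\,\mathrm dx\bigr|$ by a constant times $\max_{|\alpha|\le m+n+1}$ of the growth seminorms of $a$. Applying this to the $m$-th term shows its norm is bounded by $C\,(1+|m|)^{M+n+1}\,\|a_m\|$ up to a harmless constant, and since $\{a_m\}\in\mathcal S(\mathbb Z^n)$ the sequence $\|a_m\|$ decays faster than any polynomial, so $\sum_m(1+|m|)^{M+n+1}\|a_m\|<\infty$ and the rearrangement is justified. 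With this in hand the identity $P_\rho(a)=\sum_{m\in\mathbb Z^n}\rho(m)a_m\prod_jU_j^{m_j}$ follows, and one should also remark that the right-hand side again lies in $A_\theta^\infty$ because $\{\rho(m)a_m\}$ inherits rapid decay from the polynomial bound on $\rho(m)$ and the Schwartz decay of $\{a_m\}$.
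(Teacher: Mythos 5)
Your proposal is correct and follows essentially the same route as the paper: expand $a$ into monomials, use $\alpha_s\bigl(\prod_jU_j^{m_j}\bigr)=e^{is\cdot m}\prod_jU_j^{m_j}$, shift $\xi\mapsto\xi-m$, and apply Proposition \ref{osc}. The only difference is that you explicitly justify exchanging the infinite sum with the oscillatory integral via the seminorm estimate and the rapid decay of $\{a_m\}$, a point the paper passes over with the remark that $\alpha_s$ is an automorphism, so your version is if anything more complete.
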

\begin{proof}
First consider the case $a=\prod_jU_j^{m_j}$. We get
\begin{align*}
P_{\rho}\left(\prod_{j=1}^nU_j^{m_j}\right)
&=\frac{1}{(2\pi)^n}\int_{\mathbb R^n}\!\int_{\mathbb R^n}\!
e^{-is\cdot\xi}\rho(\xi)\alpha_s\left(\prod_{j=1}^nU_j^{m_j}\right)
\,\mathrm ds\,\mathrm d\xi \\
&=\frac{1}{(2\pi)^n}\int_{\mathbb R^n}\!\int_{\mathbb R^n}\!
e^{-is\cdot\xi}\rho(\xi)e^{is\cdot m}\prod_{j=1}^nU_j^{m_j}
\,\mathrm ds\,\mathrm d\xi \\
&=\frac{1}{(2\pi)^n}\int_{\mathbb R^n}\!\int_{\mathbb R^n}\!
e^{-is\cdot(\xi-m)}\rho(\xi)\,\mathrm ds\,\mathrm d\xi\,
\prod_{j=1}^nU_j^{m_j} \\
&=\frac{1}{(2\pi)^n}\int_{\mathbb R^n}\!\int_{\mathbb R^n}\!
e^{-is\cdot\eta}\rho(\eta+m)\,\mathrm ds\,\mathrm d\eta
\,\prod_{j=1}^nU_j^{m_j} \\
&=\rho(m)\prod_{j=1}^nU_j^{m_j},
\end{align*}
as desired, having substituted $\eta=\xi-m$ and applied the result of
Proposition \ref{osc}.
Now consider the general case
$a=\sum_{m\in\mathbb Z^n}a_m\prod_jU_j^{m_j}$.
Since $\alpha_s$ is an automorphism on $A_{\theta}$, we get
$$P_{\rho}(a)=\sum_{m\in\mathbb Z^n}\rho(m)a_m\prod_{j=1}^nU_j^{m_j},$$
and we are done.
\end{proof}
\section{Asymptotic formula for the symbol of the adjoint
of a pseudo-differential operator}
Here we prove the formula for the symbol of the adjoint for the noncommutative
$n$ torus, adapting the proof of Lemma 1.2.3 of \cite{gilkey} to the
noncommutative $n$ torus.
\begin{thm}\label{adj}
Suppose $P$ is a pseudodifferential operator with symbol
$\sigma(P)=\rho=\rho(\xi)$ of order $M$.
Then the symbol of the adjoint $P^*$ is of order $M$ and satisfies
$$\sigma(P^*)\sim\sum_{\ell\in\mathbb Z_{\ge 0}^n}
\frac{\partial^{\ell}\delta^{\ell}[(\rho(\xi))^*]}
{\ell_1!\ell_2!}.$$

\end{thm}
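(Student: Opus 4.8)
The plan is to compute the formal adjoint $P^*$ (the operator on $A_\theta^\infty$ determined by $\langle P_\rho a,b\rangle=\langle a,P^*b\rangle$ for all $a,b\in A_\theta^\infty$, which is legitimate since $P_\rho$ maps $A_\theta^\infty$ into itself by Lemma~\ref{opn}) directly from the defining oscillatory integral, recast it as $P_{\tilde\rho}$ for an explicit symbol $\tilde\rho$ of order $M$, and then Taylor expand. Write $U^m:=\prod_jU_j^{m_j}$; since $\{U^m\}_{m\in\mathbb Z^n}$ is orthonormal for $\langle\cdot,\cdot\rangle$, the $m$-th Fourier coefficient $c_m$ of any $c=\sum_mc_mU^m\in A_\theta^\infty$ obeys $|c_m|\le\|c\|$. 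Fixing $a,b\in A_\theta^\infty$ and bringing the continuous linear functional $x\mapsto\tau(b^*x)$ inside the oscillatory integral,
$$\langle P_\rho a,b\rangle=\frac{1}{(2\pi)^n}\int_{\mathbb R^n}\!\int_{\mathbb R^n}\!e^{-is\cdot\xi}\,\tau\!\left(b^*\rho(\xi)\alpha_s(a)\right)\mathrm ds\,\mathrm d\xi.$$
Using that $\tau$ is a trace, that $\tau$ is $\alpha$-invariant, and that each $\alpha_s$ is a $*$-automorphism, one rewrites $\tau(b^*\rho(\xi)\alpha_s(a))=\tau\!\left(a\,\alpha_{-s}(b^*)\,\alpha_{-s}(\rho(\xi))\right)=\big\langle a,\,\alpha_{-s}\!\left((\rho(\xi))^*\right)\alpha_{-s}(b)\big\rangle$. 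Interchanging the oscillatory integral with the antilinear functional $\langle a,\cdot\rangle$ (which, by antilinearity, conjugates $e^{-is\cdot\xi}$ to $e^{is\cdot\xi}$) and then substituting $s\mapsto-s$ exhibits
$$P^*(b)=\frac{1}{(2\pi)^n}\int_{\mathbb R^n}\!\int_{\mathbb R^n}\!e^{-is\cdot\xi}\,\alpha_s\!\left((\rho(\xi))^*\right)\alpha_s(b)\,\mathrm ds\,\mathrm d\xi,$$
which has the form of a pseudodifferential operator except that the ``amplitude'' $\alpha_s((\rho(\xi))^*)$ still carries an $s$-dependence.

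To remove that dependence, expand $(\rho(\xi))^*=\sum_{r\in\mathbb Z^n}c_r(\xi)U^r$, so that $\alpha_s((\rho(\xi))^*)=\sum_r c_r(\xi)e^{is\cdot r}U^r$ and the phase $e^{is\cdot r}$ fuses with $e^{-is\cdot\xi}$ into $e^{-is\cdot(\xi-r)}$; performing the change of variables $\xi\mapsto\xi+r$ in the $r$-th summand and resumming identifies $P^*=P_{\tilde\rho}$ with
$$\tilde\rho(\xi)=\sum_{r\in\mathbb Z^n}\big((\rho(\xi+r))^*\big)_r\,U^r.$$
One then checks that $\tilde\rho$ is genuinely a symbol of order $M$. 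The symbol $\xi\mapsto(\rho(\xi))^*$ has order $M$ (differentiation in $\xi$ commutes with $*$, each $\delta_j$ anticommutes with $*$, and $\|x^*\|=\|x\|$), so $|r^\beta c_r(\xi)|\le\|\delta^\beta[(\rho(\xi))^*]\|\le C_\beta(1+|\xi|)^M$ for every multi-index $\beta$; hence the Fourier coefficients of $(\rho(\cdot))^*$ decay faster than any power of $|r|$ with at most polynomial growth in the argument, and combining this with the Peetre inequality $\tfrac{1+|\xi|}{1+|r|}\le 1+|\xi+r|\le(1+|\xi|)(1+|r|)$ makes the series for $\tilde\rho$, and for all of its $\delta$- and $\partial$-derivatives, converge in $A_\theta^\infty$ with the order-$M$ symbol estimates. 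The same decay legitimizes the interchange of $\sum_r$ with the oscillatory integrals above.

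It remains to expand $\tilde\rho$ asymptotically. For each $N$, Taylor expand in the argument, $c_r(\xi+r)=\sum_{|\ell|<N}\frac{r^\ell}{\ell!}(\partial^\ell c_r)(\xi)+R_{N,r}(\xi)$ with $\ell!:=\prod_j\ell_j!$ and $R_{N,r}$ the integral-form remainder built from $\xi$-derivatives of order $N$. Since $\partial^\ell[(\rho(\xi))^*]=\sum_r(\partial^\ell c_r)(\xi)U^r$ and $\delta^\ell(U^r)=r^\ell U^r$, resumming over $r$ gives $\sum_r r^\ell(\partial^\ell c_r)(\xi)U^r=\delta^\ell\partial^\ell[(\rho(\xi))^*]=\partial^\ell\delta^\ell[(\rho(\xi))^*]$ (the two operators act on disjoint variables and commute), so
$$\tilde\rho(\xi)=\sum_{|\ell|<N}\frac{\partial^\ell\delta^\ell[(\rho(\xi))^*]}{\ell!}+\sum_{r\in\mathbb Z^n}R_{N,r}(\xi)U^r.$$
The term $\partial^\ell\delta^\ell[(\rho(\xi))^*]$ has order $M-|\ell|$, and $\|R_{N,r}(\xi)\|\lesssim|r|^N\sup_{0\le t\le1}\max_{|\ell|=N}\|\partial^\ell[(\rho(\xi+tr))^*]\|$, which again enjoys rapid decay in $r$ with $(1+|\xi|)^{M-N}$ growth (apply the $\delta$-estimates for $\partial^\ell[(\rho(\cdot))^*]$, a symbol of order $M-N$ when $|\ell|=N$, together with the Peetre bound); moreover differentiating the remainder in $\xi$ only lowers its order, and applying $\delta^\gamma$ merely inserts a factor $r^\gamma$ absorbed by the decay. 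Thus $\sum_rR_{N,r}(\xi)U^r$ is a symbol of order $M-N$, and letting $N$ vary gives $\sigma(P^*)=\tilde\rho\sim\sum_\ell\frac{\partial^\ell\delta^\ell[(\rho(\xi))^*]}{\ell!}$; the case $N=1$ shows in particular that $\sigma(P^*)$ has order $M$.

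The main obstacle is analytic bookkeeping rather than conceptual difficulty. Every interchange used---bringing $\tau$ and $\langle a,\cdot\rangle$ inside the double oscillatory integrals, pulling the infinite Fourier sum $\sum_r$ through them, carrying out the substitution $\xi\mapsto\xi+r$ under an oscillatory integral, and controlling the Taylor remainder uniformly in $r$---must be justified using the estimate built into the definition of oscillatory integrals together with the fact, recalled from \cite{raymond}, that oscillatory integrals admit changes of variables, interchange of integral signs, and pairing with continuous linear functionals. All of these ultimately rest on a single quantitative input: the Fourier coefficients of $\rho(\eta)$, equivalently of $(\rho(\eta))^*$, decay faster than any power of $|r|$ while growing at most polynomially in $\eta$, which is read off from the symbol estimates by applying the derivations $\delta_j$. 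Tracking the orders carefully through these steps is exactly what turns the naive identity $P^*=P_{\tilde\rho}$ into the asymptotic relation ``$\sim$''.
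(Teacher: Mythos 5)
Your argument is correct, and its first half coincides with the paper's: you compute $\langle P_\rho a,b\rangle$ using the $\alpha$-invariance and traciality of $\tau$, pull the antilinear pairing through the oscillatory integral, and arrive at the same adjoint formula; your explicit symbol $\tilde\rho(\xi)=\sum_r\bigl((\rho(\xi+r))^*\bigr)_rU^r$ is exactly the paper's $\bigl[\sum_m\rho_m(\xi-m)\prod_jU_j^{m_j}\bigr]^*$ rewritten in the Fourier modes of $\rho^*$ rather than those of $\rho$. Where you genuinely diverge is in how the asymptotic expansion and the remainder bound are obtained. The paper re-encodes the symbol as an oscillatory integral $\frac{1}{(2\pi)^n}\int\!\int e^{-ix\cdot y}\alpha_x(\rho(\xi-y))\,\mathrm dx\,\mathrm dy$, Taylor expands under the integral, and controls the remainder by integrating by parts so that powers of $y$ become $D_x^{\ell}$ acting on $\alpha_x$, i.e.\ $\delta^{\ell}$, and then invokes $\delta^{\ell}\partial^{\ell}\rho\in S^{M-N_1}$; you instead work coefficientwise, Taylor expanding each $c_r(\xi+r)$ in the shift $r$ and summing the remainders using the rapid (faster-than-polynomial) decay of the Fourier coefficients of $\partial^{\ell}[(\rho(\cdot))^*]$ — read off from the $\delta$-estimates in the symbol definition — together with Peetre's inequality. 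The discrete route buys a more elementary and, frankly, more complete remainder estimate than the paper's rather terse closing paragraph, and it makes transparent why $\delta^{\ell}$ appears (it is literally multiplication by $r^{\ell}$ on coefficients); the paper's oscillatory-integral route stays closer to the classical $\mathbb R^n$ calculus and is the pattern reused verbatim in the product theorem. Two small points to keep straight if you write this up: justify once, citing the estimate in the definition of the oscillatory integral, the interchange of $\sum_r$ with the double integral and the per-term substitution $\xi\mapsto\xi+r$ (you flag this, and the rapid decay does make it routine); and note that the $\ell_1!\ell_2!$ in the displayed statement should be read as $\ell!=\ell_1!\cdots\ell_n!$, which is what both your expansion and the paper's proof actually produce.
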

\begin{proof}
Let $a,b\in A_{\theta}^{\infty}$.
We have
\begin{align*}
\langle P_{\rho}(a),b\rangle
&= \tau\left(b^*\frac{1}{(2\pi)^n}\int_{\mathbb R^n}\!\int_{\mathbb R^n}\!
e^{-is\cdot\xi}\rho(\xi)\alpha_s(a)\,\mathrm ds\,\mathrm d\xi\right) \\
&= \frac{1}{(2\pi)^n}\int_{\mathbb R^n}\!\int_{\mathbb R^n}\!
e^{-is\cdot\xi}\tau(b^*\rho(\xi)\alpha_s(a))\,\mathrm ds\,\mathrm d\xi \\
&= \frac{1}{(2\pi)^n}\int_{\mathbb R^n}\!\int_{\mathbb R^n}\!e^{-is\cdot\xi}
\tau(\alpha_{-s}(\rho(\xi)^*b)^*a)\,\mathrm ds\,\mathrm d\xi \\
&= \tau\left(\left(\frac{1}{(2\pi)^n}\int_{\mathbb R^n}\!
\int_{\mathbb R^n}\!e^{+is\cdot\xi}
\alpha_{-s}(\rho(\xi)^*b)\,\mathrm ds\,\mathrm d\xi\right)^*a\right) \\
&= \langle a,P_{\rho}^*(b)\rangle
\end{align*}
where
\begin{align*}
P_{\rho}^*(b) &= \frac{1}{(2\pi)^n}\int_{\mathbb R^n}\!\int_{\mathbb R^n}\!
e^{+is\cdot\xi}\alpha_{-s}(\rho(\xi)^*b)\,\mathrm ds\,\mathrm d\xi \\
&= \frac{1}{(2\pi)^n}\int_{\mathbb R^n}\!\int_{\mathbb R^n}\!e^{+is\cdot\xi}
\alpha_{-s}(\rho(\xi)^*)\alpha_{-s}(b)\,\mathrm ds\,\mathrm d\xi \\
&= \sum_{m,k}\frac{1}{(2\pi)^n}\int_{\mathbb R^n}\int_{\mathbb R^n}
e^{+is\cdot\xi}e^{+is\cdot m}\rho_m(\xi)^*e^{-is\cdot k}b_k
\,\mathrm ds\,\mathrm d\xi\,U_n^{-m_n}\cdots U_1^{-m_1}
U_1^{k_1}\cdots U_n^{k_n} \\
&= \sum_{m,k}\frac{1}{(2\pi)^n}\int_{\mathbb R^n}\int_{\mathbb R^n}
e^{-is\cdot\eta}\rho_m((k-m)-\eta)^*b_k
\,\mathrm ds\,\mathrm d\eta\,U_n^{-m_n}\cdots U_1^{-m_1}
U_1^{k_1}\cdots U_n^{k_n} \\
&= \sum_{m,k}\rho_m(k-m)^*b_k
U_n^{-m_n}\cdots U_1^{-m_1}U_1^{k_1}\cdots U_n^{k_n} \\
&= \sum_{m,k}(\rho_m(k-m)U_1^{m_1}\cdots U_n^{m_n})^*
(b_kU_1^{k_1}\cdots U_n^{k_n}),
\end{align*}
so 
\begin{align*}
\sigma(P_{\rho}^*)(\xi)
&= \left[\sum_m\rho_m(\xi-m)\prod_{j=1}^nU_j^{m_j}\right]^* \\
&= \left[\frac{1}{(2\pi)^n}\int_{\mathbb R^n}\!\int_{\mathbb R^n}\!
e^{-ix\cdot y}\sum_m\rho_m(\xi-y)\alpha_x\left(\prod_{j=1}^nU_j^{m_j}\right)
\,\mathrm dx\,\mathrm dy\right]^* \\
&= \left[\frac{1}{(2\pi)^n}\int_{\mathbb R^n}\!\int_{\mathbb R^n}\!
e^{-ix\cdot y}\alpha_x(\rho(\xi-y))\,\mathrm dx\,\mathrm dy\right]^*
\end{align*}
We have
\begin{align*}
\rho(\xi-y) &= \sum_{|\ell|<N_1}\frac{(-y)^{\ell}}{\ell!}
(\partial^{\ell}\rho)(\xi)+R_{N_1}(\xi,y) \\
\alpha_x(\rho(\xi-y)) &= \sum_{|\ell|<N_1}\sum_m
\frac{(-y)^{\ell}}{\ell!}(\partial^{\ell}\rho_m)(\xi)
e^{ix\cdot m}\left(\prod_{j=1}^nU_j^{m_j}\right)+\alpha_x(R_{N_1}(\xi,y))
\end{align*}
where
$$R_{N_1}(\xi,y)=N_1\sum_{|\ell|=N_1}\frac{(-y)^{\ell}}{\ell!}
\int_0^1\!(1-\gamma)^{N_1-1}(\partial^{\ell}\rho)(\xi-y\gamma)
\,\mathrm d\gamma$$
so the corresponding symbol is
\begin{align*}
\sigma(P_{\rho}^*)(\xi)
&= \left[\sum_{|\ell|<N_1}\sum_m\frac{(-m)^{\ell}}
{\ell!}(\partial^{\ell}\rho_m)(\xi)\prod_{j=1}^mU_j^{m_j}
+T_{N_1}(\xi,y)\right]^* \\
&= \sum_{|\ell|<N_1}\frac{\partial^{\ell}\delta^{\ell}[(\rho(\xi))^*]}
{\ell!}+T_{N_1}(\xi,y)^*
\end{align*}
where
$$T_{N_1}(\xi,y)=\frac{1}{(2\pi)^n}\int_{\mathbb R^n}\!\int_{\mathbb R^n}
e^{-ix\cdot y}\alpha_x(R_{N_1}(\xi,y))\,\mathrm dx\,\mathrm dy$$
It remains to show that this symbol is of order $M$.
Obviously,
$$\sum_{N\le\ell<N_1}\frac{\partial^{\ell}\delta^{\ell}[(\rho(\xi))^*]}
{\ell!}\in S^{M-N},$$
so we need to show that the remainder is of order $M-N_1$.
Note that $$\sigma(P_{\rho}^*)(\xi)
-\sum_{|\ell|<N_1}\frac{\partial^{\ell}\delta^{\ell}[(\rho(\xi))^*]}
{\ell!}=T_{N_1}(\xi,y)^*.$$
Integrating by parts, we get
\begin{align*}
&\int_{\mathbb R^n}\!\int_{\mathbb R^n}\!
e^{-ix\cdot y}\frac{(-y)^{\ell}}{\ell!}\alpha_x\left(
\int_0^1\!(1-\gamma)^{N_1-1}(\partial^{\ell}\rho)(\xi-y\gamma)
\,\mathrm d\gamma\right)\,\mathrm dx\,\mathrm dy \\
&= \frac{1}{\ell!}\int_{\mathbb R^n}\!\int_{\mathbb R^n}\!
e^{-ix\cdot y}(-D_x)^{\ell}\alpha_x\left(
\int_0^1\!(1-\gamma)^{N_1-1}(\partial^{\ell}\rho)(\xi-y\gamma)
\,\mathrm d\gamma\right)\,\mathrm dx\,\mathrm dy \\
&= \frac{1}{\ell!}\int_{\mathbb R^n}\!\int_{\mathbb R^n}\!
e^{-ix\cdot y}(-\delta)^{\ell}\alpha_x\left(
\int_0^1\!(1-\gamma)^{N_1-1}(\partial^{\ell}\rho)(\xi-y\gamma)
\,\mathrm d\gamma\right)\,\mathrm dx\,\mathrm dy \\
&= \frac{1}{\ell!}\int_{\mathbb R^n}\!\int_{\mathbb R^n}\!
e^{-ix\cdot y}
\int_0^1\!(1-\gamma)^{N_1-1}
(-\delta)^{\ell}\alpha_x((\partial^{\ell}\rho)(\xi-y\gamma))
\,\mathrm d\gamma\,\mathrm dx\,\mathrm dy \\
&= \frac{1}{\ell!}\int_{\mathbb R^n}\!\int_{\mathbb R^n}\!
e^{-ix\cdot y}(-1)^{\ell}
\int_0^1\!(1-\gamma)^{N_1-1}
\alpha_x((\delta^{\ell}\partial^{\ell}\rho)(\xi-y\gamma))
\,\mathrm d\gamma\,\mathrm dx\,\mathrm dy
\end{align*}
where, for arbitrary $a=\sum_ma_m\prod_jU_j^{m_j}$,
\begin{align*}
(-D_x)^{\ell}\alpha_x(a)
&= (-D_x)^{\ell}\alpha_x\left(\sum_ma_m\prod_{j=1}^nU_j^{m_j}\right) \\
&= (-D_x)^{\ell}\sum_ma_me^{ix\cdot m}\prod_{j=1}^nU_j^{m_j} \\
&= \sum_ma_m(-m)^{\ell}e^{ix\cdot m}\prod_{j=1}^nU_j^{m_j} \\
&= (-\delta)^{\ell}\alpha_x(a).
\end{align*}
Since $\rho\in S^M$ and $|\ell|=N_1$ we have
$\delta^{\ell}\partial^{\ell}\rho\in S^{M-N_1}$.
We get the boundedness of
$$\ell^{N_1-M}(\xi)\int_{\mathbb R^n}\!\int_{\mathbb R^n}\!
e^{-ix\cdot y}\alpha_x(R_{N_1}(\xi,y))\,\mathrm dx\,\mathrm dy$$
because $\partial^{\ell}R_{N_1}(y,\xi)$ is the rest of index $N_1$
in the Taylor expansion of $\partial^{\ell}(\xi-y\gamma)$ for which
one has $\partial^{\ell}\rho\in S^{M-N_1}$.
\end{proof}
\section{Asymptotic formula for the symbol of a product
of two pseudo-differential operators}
Next we prove the formula for the product or composition of symbols
for the noncommutative $n$ torus, adapting the proof of
Theorem 7.1 of \cite{wong}.
\begin{thm}\label{prod}
Suppose that $P$ is a pseudodifferential operator with symbol
$\sigma(P)=\rho=\rho(\xi)$ of order $M_1$, and $Q$ is a pseudodifferential
operator with symbol $\sigma(Q)=\phi=\phi(\xi)$ of order $M_2$.
Then the symbol of the product $QP$ is of order $M_1+M_2$ and satisfies
$$\sigma(QP)\sim\sum_{\ell\in\mathbb Z_{\ge 0}^n}
\frac{1}{\ell_1!\ell_2!}\partial^{\ell}\phi(\xi)\delta^{\ell}\rho(\xi),$$
where $\partial^{\ell}:=\prod_j\partial_j^{\ell_j}$
and $\delta^{\ell}:=\prod_j\delta_j^{\ell_j}$.
\end{thm}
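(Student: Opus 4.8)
The plan is to adapt the proof of Theorem~7.1 of \cite{wong} to the noncommutative torus: compute $QP(a)=P_{\phi}(P_{\rho}(a))$ on a general $a\in A_{\theta}^{\infty}$, substitute the defining oscillatory integrals for the two factors, reorganize the resulting iterated oscillatory integral into the form $P_{\sigma(QP)}(a)$ for an explicit symbol, and then read off its asymptotic expansion by Taylor expanding $\phi$.

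First I would compute $QP(a)$. Applying the definition of $P_{\phi}$ to $P_{\rho}(a)$ and using that $\alpha_t$ is an automorphism with $\alpha_t\alpha_s=\alpha_{s+t}$ gives
$$QP(a)=\frac{1}{(2\pi)^{2n}}\int_{\mathbb R^n}\!\int_{\mathbb R^n}\!\int_{\mathbb R^n}\!\int_{\mathbb R^n}\! e^{-is\cdot\xi-it\cdot\eta}\phi(\eta)\alpha_t(\rho(\xi))\alpha_{s+t}(a)\,\mathrm ds\,\mathrm d\xi\,\mathrm dt\,\mathrm d\eta.$$
Interchanging the (oscillatory) integrations and substituting $u=s+t$ --- both legitimate by \cite{raymond} --- isolates the factor $e^{-iu\cdot\xi}(\,\cdot\,)\alpha_u(a)$ and exhibits $QP=P_{\sigma(QP)}$ with
$$\sigma(QP)(\xi)=\frac{1}{(2\pi)^n}\int_{\mathbb R^n}\!\int_{\mathbb R^n}\! e^{it\cdot(\xi-\eta)}\phi(\eta)\alpha_t(\rho(\xi))\,\mathrm dt\,\mathrm d\eta=\frac{1}{(2\pi)^n}\int_{\mathbb R^n}\!\int_{\mathbb R^n}\! e^{-it\cdot\zeta}\phi(\xi+\zeta)\alpha_t(\rho(\xi))\,\mathrm dt\,\mathrm d\zeta$$
after the substitution $\eta=\xi+\zeta$. (The same formula can be obtained purely algebraically, sidestepping the oscillatory-integral bookkeeping: Lemma~\ref{opn} gives $QP\left(\prod_jU_j^{m_j}\right)=P_{\phi}\left(\rho(m)\prod_jU_j^{m_j}\right)$; expanding $\rho(m)=\sum_k\rho_k(m)\prod_jU_j^{k_j}$, reordering the generators while keeping careful track of the phases produced, and applying Lemma~\ref{opn} once more, one arrives at $\sigma(QP)(\xi)=\sum_k\phi(\xi+k)\rho_k(\xi)\prod_jU_j^{k_j}$ with $\rho(\xi)=\sum_k\rho_k(\xi)\prod_jU_j^{k_j}$, which is precisely the value of the oscillatory integral above --- this serves as a useful cross-check but still requires the estimates below.)

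Next I would insert the Taylor expansion $\phi(\xi+\zeta)=\sum_{|\ell|<N}\frac{\zeta^{\ell}}{\ell!}(\partial^{\ell}\phi)(\xi)+R_N(\xi,\zeta)$, with integral remainder $R_N(\xi,\zeta)=N\sum_{|\ell|=N}\frac{\zeta^{\ell}}{\ell!}\int_0^1(1-\gamma)^{N-1}(\partial^{\ell}\phi)(\xi+\gamma\zeta)\,\mathrm d\gamma$, into the integral for $\sigma(QP)(\xi)$. In each $|\ell|<N$ term the scalar $(\partial^{\ell}\phi)(\xi)$ passes to the left of the integral; writing $\zeta^{\ell}e^{-it\cdot\zeta}$ in terms of $D_t$ and integrating by parts in $t$ --- exactly as in the proof of Theorem~\ref{adj}, using $D_t^{\ell}\alpha_t(\rho(\xi))=\alpha_t(\delta^{\ell}\rho(\xi))$ --- turns the inner double integral into $\frac{1}{(2\pi)^n}\int_{\mathbb R^n}\!\int_{\mathbb R^n}e^{-it\cdot\zeta}\alpha_t(\delta^{\ell}\rho(\xi))\,\mathrm dt\,\mathrm d\zeta$, which Proposition~\ref{osc} evaluates to $\delta^{\ell}\rho(\xi)$. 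Hence the $|\ell|<N$ part of $\sigma(QP)(\xi)$ is $\sum_{|\ell|<N}\frac{1}{\ell!}(\partial^{\ell}\phi)(\xi)(\delta^{\ell}\rho)(\xi)$, the claimed series truncated at order $N$; a Leibniz expansion shows each summand $\frac{1}{\ell!}(\partial^{\ell}\phi)(\delta^{\ell}\rho)$ lies in $S^{M_1+M_2-|\ell|}$ (the $\partial^{\ell}$ lowers the order by $|\ell|$, the $\delta^{\ell}$ leaves it unchanged), and in particular $\ell=0$ gives $\phi\rho\in S^{M_1+M_2}$.

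The real obstacle is the remainder estimate: one must show
$$\mathrm{Rem}_N(\xi):=\sigma(QP)(\xi)-\sum_{|\ell|<N}\frac{1}{\ell!}(\partial^{\ell}\phi)(\xi)(\delta^{\ell}\rho)(\xi)\in S^{M_1+M_2-N}$$
for every $N$, which is exactly what is meant by the asserted asymptotic expansion (and, for $N=1$, gives the order statement). After the same integration by parts, $\mathrm{Rem}_N(\xi)$ is an integral over $\gamma\in[0,1]$ of oscillatory integrals in $(t,\zeta)$ whose amplitudes are $(\partial^{\ell}\phi)(\xi+\gamma\zeta)\alpha_t(\delta^{\ell}\rho(\xi))$ with $|\ell|=N$; these now depend on both integration variables. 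I would estimate each such oscillatory integral by the basic estimate recorded in the definition of oscillatory integrals: $\zeta$-derivatives fall on $(\partial^{\ell}\phi)(\xi+\gamma\zeta)$, which by the symbol estimates for $\phi$ and a Peetre-type inequality $(1+|\xi+\gamma\zeta|)^{r}\le C(1+|\xi|)^{r}(1+|\zeta|)^{|r|}$ (with the range $|\zeta|\gtrsim|\xi|$ treated separately when $M_2-N<0$) are $O\big((1+|\xi|)^{M_2-N}\big)$ times a fixed power of $(1+|\zeta|)$, while $t$-derivatives fall on $\alpha_t(\delta^{\ell}\rho(\xi))$, which, since $\alpha_t$ is an isometric $\ast$-automorphism commuting with $\delta$, is bounded in norm uniformly in $t$ by $O\big((1+|\xi|)^{M_1}\big)$ together with all of its $t$-derivatives. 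Thus the amplitude meets the hypotheses for an oscillatory integral with a fixed $m$ and with constant of size $(1+|\xi|)^{M_1+M_2-N}$, so $\|\mathrm{Rem}_N(\xi)\|\le C(1+|\xi|)^{M_1+M_2-N}$; differentiating $\mathrm{Rem}_N$ under the integral sign in $\xi$ and applying the $\delta_j$ (both of which commute with $\alpha_t$ and with the oscillatory integral) and repeating the count yields the full family $\|\delta^i\partial^j\mathrm{Rem}_N(\xi)\|\le C(1+|\xi|)^{M_1+M_2-N-|j|}$, so $\mathrm{Rem}_N\in S^{M_1+M_2-N}$ and the theorem follows.
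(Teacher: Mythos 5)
Your proposal is correct in substance and follows the same skeleton as the paper's proof (which adapts Theorem 7.1 of \cite{wong}): combine the two oscillatory integrals, change variables $u=s+t$, $\eta=\xi+\zeta$ to exhibit the symbol $\mu(\xi)=\frac{1}{(2\pi)^n}\int\!\int e^{-it\cdot\zeta}\phi(\xi+\zeta)\alpha_t(\rho(\xi))\,\mathrm dt\,\mathrm d\zeta$, Taylor-expand $\phi$, integrate by parts using $D_t^{\ell}\alpha_t=\alpha_t\delta^{\ell}$, and evaluate by Proposition \ref{osc}; your algebraic cross-check $\sigma(QP)(\xi)=\sum_k\phi(\xi+k)\rho_k(\xi)\prod_jU_j^{k_j}$ is a nice confirmation consistent with Lemma \ref{opn}. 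The one genuine difference is where the convergence bookkeeping lives. The paper explicitly flags that ``issues with convergence of integrals'' force it to first cut off $\phi$ with the partition of unity $\{\varphi_k\}$ of Theorem 6.1 of \cite{wong}, compute $\sigma(P_{\phi_k}\circ P_{\rho})=\mu_k$, and then sum over $k$; you instead merge $P_{\phi}\circ P_{\rho}$ directly into a single quadruple oscillatory integral and appeal to the manipulation rules of \cite{raymond}. That can be defended (the phase $-s\cdot\xi-t\cdot\eta$ is a nondegenerate quadratic form on $\mathbb R^{4n}$ and the amplitude has the required polynomial bounds), but the passage from the nested composition to the joint oscillatory integral is exactly the Fubini-type step the cutoffs are designed to justify, so you should either spell that out or simply insert the same $\phi_k=\phi\varphi_k$ device, which costs nothing in your argument. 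On the other side, your treatment of the remainder --- the Peetre inequality to control $(\partial^{\ell}\phi)(\xi+\gamma\zeta)$, the uniform-in-$t$ bounds on $\alpha_t(\delta^{\ell}\rho(\xi))$ and its $t$-derivatives, the use of the quantitative estimate in the definition of the oscillatory integral, and the closing of the estimate under $\delta^i\partial^j$ --- is more explicit than the paper's rather terse final paragraph, and is the part of the argument most worth writing out; just note that the estimate is being applied to $A_{\theta}^{\infty}$-valued amplitudes, a routine but unstated extension in both your write-up and the paper.
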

\begin{proof}
We want to show that if $\rho:\mathbb R^n\rightarrow A_{\theta}^{\infty}$
is of order $M_1$ and $\phi:\mathbb R^n\rightarrow A_{\theta}^{\infty}$
is of order $M_2$, $P_{\phi}\circ P_{\rho}=P_{\mu}$
where $\mu$ is of order $M_1+M_2$ and has asymptotic expansion
$$\mu\sim\sum_{\ell}\frac{1}{\ell!}
\partial^{\ell}\phi(\xi)\delta^{\ell}\rho(\xi).$$
Let $\{\varphi_k\}$ be the partition of unity constructed in Theorem 6.1 of
\cite{wong} and define $\phi_k(\xi):=\phi(\xi)\varphi_k(\xi)$.
We have $$P_{\phi_k}(a)=\frac{1}{(2\pi)^n}
\int_{\mathbb R^n}\!\int_{\mathbb R^n}\!
e^{is\cdot\xi}\phi_k(\xi)\alpha_s(a)\,\mathrm ds\,\mathrm d\xi$$
Summing over $k$ from zero to infinity and applying Fubini's Theorem, we get
\begin{align*}
\sum_{k=0}^{\infty}P_{\phi_k}(a)
&=\frac{1}{(2\pi)^n}\int_{\mathbb R^n}\!\int_{\mathbb R^n}\!
e^{-is\cdot\xi}\sum_{k=0}^{\infty}\phi_k(\xi)\alpha_s(a)
\,\mathrm ds\,\mathrm d\xi \\
&=\frac{1}{(2\pi)^n}\int_{\mathbb R^n}\!\int_{\mathbb R^n}\!
e^{-is\cdot\xi}\phi(\xi)\alpha_s(a)
\,\mathrm ds\,\mathrm d\xi,
\end{align*}
so $$P_{\phi}(a)=\sum_{k=0}^{\infty}P_{\phi_k}(a)$$ and the convergence
of the series is absolute and uniform for all $a\in A_{\theta}^{\infty}$.
We want to compute the symbol of $P_{\phi}\circ P_{\rho}$, but issues with
convergence of integrals make it so we need to compute the symbol of
$P_{\phi_k}\circ P_{\rho}$. Let $a\in A_{\theta}^{\infty}$ be arbitrary.
Applying $P_{\phi_k}\circ P_{\rho}$ we get
\begin{align*}
P_{\phi_k}(P_{\rho}(a))
&=\frac{1}{(2\pi)^n}\int_{\mathbb R^n}\!\int_{\mathbb R^n}\!
e^{-is\cdot\xi}\phi_k(\xi)\alpha_s(P_{\rho}(a))\,\mathrm ds\,\mathrm d\xi \\
&=\frac{1}{(2\pi)^n}\int_{\mathbb R^n}\!\int_{\mathbb R^n}\!
e^{-is\cdot\xi}\phi_k(\xi)\alpha_s\!\!\left(
\frac{1}{(2\pi)^n}\int_{\mathbb R^n}\!\int_{\mathbb R^n}\!
e^{-it\cdot\eta}\rho(\eta)\alpha_t(a)\,\mathrm dt\,\mathrm d\eta
\right)\mathrm ds\,\mathrm d\xi \\
&=\frac{1}{(2\pi)^{2n}}\int_{\mathbb R^n}\!\int_{\mathbb R^n}\!\!
\left\{\int_{\mathbb R^n}\!\int_{\mathbb R^n}\!
e^{-is\cdot\xi-it\cdot\eta}\phi_k(\xi)\alpha_s(\rho(\eta))\alpha_{s+t}(a)
\,\mathrm dt\,\mathrm d\eta\right\}\mathrm ds\,\mathrm d\xi \\
&=\frac{1}{(2\pi)^{2n}}\int_{\mathbb R^n}\!\int_{\mathbb R^n}\!\!
\left\{\int_{\mathbb R^n}\!\int_{\mathbb R^n}\!
e^{-ix\cdot\xi-i(y-x)\cdot\eta}\phi_k(\xi)\alpha_x(\rho(\eta))\alpha_y(a)
\,\mathrm dy\,\mathrm d\eta\right\}\mathrm dx\,\mathrm d\xi \\
&=\frac{1}{(2\pi)^{2n}}\int_{\mathbb R^n}\!\int_{\mathbb R^n}\!\!
\left\{\int_{\mathbb R^n}\!\int_{\mathbb R^n}\!
e^{-ix\cdot(\xi-\eta)-iy\cdot\eta}\phi_k(\xi)\alpha_x(\rho(\eta))\alpha_y(a)
\,\mathrm dy\,\mathrm d\eta\right\}\mathrm dx\,\mathrm d\xi \\
&=\frac{1}{(2\pi)^{2n}}\int_{\mathbb R^n}\!\int_{\mathbb R^n}\!\!
\left\{\int_{\mathbb R^n}\!\int_{\mathbb R^n}\!e^{-ix\cdot\sigma-iy\cdot\tau}
\phi_k(\sigma+\tau)\alpha_x(\rho(\tau))\alpha_y(a)
\,\mathrm dy\,\mathrm d\tau\right\}\mathrm dx\,\mathrm d\sigma \\
&=\frac{1}{(2\pi)^n}\int_{\mathbb R^n}\!\int_{\mathbb R^n}\!
e^{-iy\cdot\tau}\left\{
\frac{1}{(2\pi)^n}\int_{\mathbb R^n}\!\int_{\mathbb R^n}\!
e^{-ix\cdot\sigma}\phi_k(\sigma+\tau)\alpha_x(\rho(\tau))
\,\mathrm dx\,\mathrm d\sigma\right\}\alpha_y(a)\,\mathrm dy\,\mathrm d\tau \\
&=\frac{1}{(2\pi)^n}\int_{\mathbb R^n}\!\int_{\mathbb R^n}\!
e^{-iy\cdot\tau}\mu_k(\tau)\alpha_y(a)\,\mathrm dy\,\mathrm d\tau
\end{align*}
where
$$\mu_k(\tau)=\frac{1}{(2\pi)^n}\int_{\mathbb R^n}\!\int_{\mathbb R^n}\!
e^{-ix\cdot\sigma}\phi_k(\sigma+\tau)\alpha_x(\rho(\tau)))
\,\mathrm dx\,\mathrm d\sigma,$$
having done the changes of variables $(x,y)=(s,s+t)$ and
$(\sigma,\tau)=(\xi-\eta,\eta)$ and applied Proposition \ref{osc}.
This suggests that
$$P_{\phi}(P_{\rho}(a))
=\frac{1}{(2\pi)^n}\int_{\mathbb R^n}\!\int_{\mathbb R^n}\!
e^{-iy\cdot\tau}\mu(\tau)\alpha_y(a)\,\mathrm dy\,\mathrm d\tau$$
where $\mu(\tau)=\sum_{k=0}^{\infty}\mu_k(\tau)$.
We need to show that $\mu$ is a symbol in $S^{M_1+M_2}$ and
has our desired asymptotic expression.
Define $\mu_k$ by
$$\mu_k(\xi)=\frac{1}{(2\pi)^2}\int_{\mathbb R^n}\!\int_{\mathbb R^n}\!
e^{-ix\cdot y}\phi_k(\xi+y)\alpha_x(\rho(\xi))\,\mathrm dx\,\mathrm dy$$
for all $\xi\in\mathbb R^n$.
By Taylor's formula with integral remainder given in Theorem 6.3 of
\cite{wong}, we get
$$\phi_k(\xi+y)
=\sum_{|\ell|<N_1}\frac{y^{\ell}}{\ell!}
(\partial^{\ell}\phi_k)(\xi)+R_{N_1}(y,\xi)$$
where
\begin{equation}
R_{N_1}(y,\xi)=N_1\sum_{|\ell|=N_1}
\frac{y^{\ell}}{\ell!}
\int_0^1\!(1-\gamma)^{N_1-1}(\partial^{\ell}\phi_k)
(\xi+\gamma y)\,\mathrm d\gamma
\end{equation}
for all $y,\xi\in\mathbb R^2$.
Substituting back into our expression for $\mu_k(\xi)$ we get
$$\mu_k(\xi)=\frac{1}{(2\pi)^n}\int_{\mathbb R^n}\!\int_{\mathbb R^n}\!
e^{-ix\cdot y}\sum_{|\ell|<N_1}\frac{y^{\ell}}{\ell!}
(\partial^{\ell}\phi_k)(\xi)\alpha_x(\rho(\xi))
\,\mathrm dx\,\mathrm dy+T_{N_1}^{(k)}(\xi)$$
where
$$T_{N_1}^{(k)}(\xi)=\frac{1}{(2\pi)^n}\int_{\mathbb R^n}\!\int_{\mathbb R^n}\!
e^{-ix\cdot y}R_{N_1}(y,\xi)\alpha_x(\rho(\xi))\,\mathrm dx\,\mathrm dy.$$
Expressing $\rho(\xi)$ as $\rho(\xi)=\sum_m\rho_m(\xi)\prod_{j=1}^nU_j^{m_j}$, we see
that $$\alpha_x(\rho(\xi))=\sum_m\rho_m(\xi)e^{ix\cdot m}\prod_{j=1}^nU_j^{m_j}$$ so
\begin{align*}
\mu_k(\xi)-T_{N_1}^{(k)}(\xi)
&=\sum_{|\ell|<N_1}\sum_m\frac{1}{(2\pi)^n}
\int_{\mathbb R^n}\!\int_{\mathbb R^n}\!
e^{-ix\cdot y}\frac{y^{\ell}}{\ell!}
(\partial^{\ell}\phi_k)(\xi)
\rho_m(\xi)e^{ix\cdot m}\prod_{j=1}^nU_j^{m_j}\,\mathrm dx\,\mathrm dy \\
&=\sum_{|\ell|<N_1}\frac{1}{\ell!}
(\partial^{\ell}\phi_k)(\xi)
\sum_m\rho_m(\xi)\prod_{j=1}^nU_j^{m_j}\frac{1}{(2\pi)^n}
\int_{\mathbb R^n}\!\int_{\mathbb R^n}\!e^{-ix\cdot(y-m)}y^{\ell}
\,\mathrm dx\,\mathrm dy \\
&=\sum_{|\ell|<N_1}\frac{1}{\ell!}(\partial^{\ell}\phi_k)(\xi)
\sum_m\rho_m(\xi)\prod_{j=1}^nU_j^{m_j}m^{\ell} \\
&=\sum_{|\ell|<N_1}\frac{1}{\ell!}
(\partial^{\ell}\phi_k)(\xi)(\delta^{\ell}\rho)(\xi).
\end{align*}
Let $\mu(\xi)=\sum_{k=0}^{\infty}\mu_k(\xi)$. It remains to show that
$\mu$ is of order $M_1+M_2$. Obviously,
$$\sum_{N\le|\ell|<N_1}\frac{y^{\ell}}{\ell!}
(\partial^{\ell}\phi)(\xi)(\delta^{\ell}\rho)(\xi)\in S^{M_1+M_2-N},$$
so we just need to show that the remainder is of order $M_1+M_2-N_1$.
Note that
$$\mu(\xi)-\sum_{|\ell|<N_1}\frac{y^{\ell}}{\ell!}
(\partial^{\ell}\phi)(\xi)(\delta^{\ell}\rho)(\xi)
=\sum_{k=0}^{\infty}T_{N_1}^{(k)}(\xi)$$
where
$$T_{N_1}^{(k)}(\xi)=\frac{1}{(2\pi)^n}\int_{\mathbb R^n}\!\int_{\mathbb R^n}\!
e^{-ix\cdot y}R_{N_1}(y,\xi)\alpha_x(\rho(\xi))\,\mathrm dx\,\mathrm dy$$
and
$$R_{N_1}(y,\xi)
=N_1\sum_{|\ell|=N_1}\frac{y^{\ell}}{\ell!}
\int_0^1\!(1-\gamma)^{N_1-1}(\partial^{\ell}\phi_k)(\xi+\gamma y)
\,\mathrm d\gamma.$$
Integrating by parts, we get
\begin{align*}
&\int_{\mathbb R^n}\!\int_{\mathbb R^n}\!
e^{-ix\cdot y}\frac{y^{\ell}}{\ell!}\int_0^1(1-\gamma)^{N_1-1}
(\partial^{\ell}\phi_k)(\xi+\gamma y)\,\mathrm d\gamma\alpha_x(\rho(\xi))
\,\mathrm dx\,\mathrm dy \\
&= \frac{1}{\ell!}\int_{\mathbb R^n}\!\int_{\mathbb R^n}\!
e^{-ix\cdot y}y^{\ell}\int_0^1(1-\gamma)^{N_1-1}(\partial^{\ell}(\phi_k)
(\xi+\gamma y)\,\mathrm d\gamma\alpha_x(\rho(\xi))\,\mathrm dx\,\mathrm dy \\
&= \frac{1}{\ell!}\int_{\mathbb R^n}\!\int_{\mathbb R^n}\!
e^{-ix\cdot y}\int_0^1(1-\gamma)^{N_1-1}(\partial^{\ell}(\phi_k)
(\xi+\gamma y)\,\mathrm d\gamma D_x^{\ell}\alpha_x(\rho(\xi))
\,\mathrm dx\,\mathrm dy \\
&= \frac{1}{\ell!}\int_{\mathbb R^n}\!\int_{\mathbb R^n}\!
e^{-ix\cdot y}\int_0^1(1-\gamma)^{N_1-1}(\partial^{\ell}(\phi_k)
(\xi+\gamma y)\,\mathrm d\gamma\delta^{\ell}\alpha_x(\rho(\xi))
\,\mathrm dx\,\mathrm dy \\
&= \frac{1}{\ell!}\int_{\mathbb R^n}\!\int_{\mathbb R^n}\!
e^{-ix\cdot y}\int_0^1(1-\gamma)^{N_1-1}(\partial^{\ell}(\phi_k)
(\xi+\gamma y)\,\mathrm d\gamma\alpha_x((\delta^{\ell}\rho)(\xi))
\,\mathrm dx\,\mathrm dy
\end{align*}
where for arbitrary $a=\sum_ma_m\prod_jU_j^{m_j}$ we have
\begin{align*}
(D_x)^{\ell}\alpha_x(a)
&= (D_x)^{\ell}\alpha_x\left(\sum_ma_m\prod_{j=1}^nU_j^{m_j}\right) \\
&= (D_x)^{\ell}\sum_ma_me^{ix\cdot m}\prod_{j=1}^nU_j^{m_j} \\
&= \sum_m a_m m^{\ell}e^{ix\cdot m}\prod_{j=1}^nU_j^{m_j} \\
&= \delta^{\ell}\alpha_x(a).
\end{align*}
Since $|\ell|=N_1$, we have $\partial^{\ell}(\phi_k)\in S^{M_2-N_1}$
and $\delta^{\ell}\rho\in S^{M_1}$. We get the boundedness of
$$\mu^{N_1-M_1-M_2}(\xi)\int_{\mathbb R^n}\!\int_{\mathbb R^n}\!
e^{-ix\cdot y}R_{N_1}(y,\xi)\alpha_x(\rho(\xi))\,\mathrm dx\,\mathrm dy$$
since $\delta^{\ell}\rho\in S^{M_1}$ and
$\partial^{\ell}R_{N_1}(y,\xi)$ is the rest of index $N_1$ in
Taylor's expansion of $\partial^{\ell}\phi_k(\xi+\gamma y)$ for which
on has $\partial^{\ell}\phi_k\in S^{M_2-N_1}$.
\end{proof}
\section{Sobolev spaces on the noncommutative $n$ torus}
Let $\lambda(\xi)=(1+\xi_1^2+\cdots+\xi_n^2)^{1/2}$.
Consider the following inner product on $A_{\theta}^{\infty}$.
\begin{define}
Define the Sobolev inner product
$\langle\cdot,\cdot\rangle_s:A_{\theta}^{\infty}\times
A_{\theta}^{\infty}\rightarrow\mathbb C$ by
$$\langle a,b\rangle_s:=
\langle P_{\lambda^s}(a),P_{\lambda^s}(b)\rangle
=\sum_m(1+|m_1|^2+\cdots+|m_n|^2)^s\overline{b_m}a_m.$$
\end{define}
Note that for $s=0$ this agrees with
$\langle\cdot,\cdot\rangle$.
This inner product induces the following norm.
\begin{define}
Define the Sobolev norm $||\cdot||_s:A_{\theta}^{\infty}\rightarrow\mathbb R_{\ge 0}$ by
$$||a||_s^2:=\langle P_{\lambda^s}(a),P_{\lambda^s}(a)\rangle
=\sum_m(1+|m_1|^2+\cdots+|m_n|^2)^s|a_m|^2.$$
\end{define}
Using this norm, we can define the analog of Sobolev spaces on the
noncommutative $n$ torus.
\begin{define}
Define the Sobolev space $H^s$ to be the completion of $A_{\theta}^{\infty}$
with respect to $||\cdot||_s$.
\end{define}
We can prove that a pseudo-differential operator of order $d\in\mathbb R$
continuously maps $H^s$ into $H^{s-d}$. However we must first prove the case
where $s=d$.
\begin{thm}\label{ws3}
Suppose $\rho\in S^d$. Then $||P_{\rho}(a)||_0\le C||a||_d$ for
some constant $C>0$ and $P_{\rho}$ defines a bounded operator
$P_{\rho}:H^d\rightarrow H^0$.
\end{thm}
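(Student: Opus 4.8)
The plan is to use Lemma~\ref{opn} to turn the operator bound into a convolution estimate on $\ell^2(\mathbb Z^n)$ and then apply Young's inequality; note that this uses nothing beyond Lemma~\ref{opn}, so there is no circularity with the general $H^s\to H^{s-d}$ statement. Write $V_m:=U_1^{m_1}\cdots U_n^{m_n}$, so that $\{V_m\}_{m\in\mathbb Z^n}$ is orthonormal for $\langle\cdot,\cdot\rangle$ and $\|a\|_s^2=\sum_m\lambda(m)^{2s}|a_m|^2$. By Lemma~\ref{opn}, $P_\rho(a)=\sum_m\rho(m)\,a_m\,V_m$; expanding $\rho(m)=\sum_k\rho_k(m)V_k$ and reordering $V_kV_m=\omega(k,m)\,V_{k+m}$, where $\omega(k,m)$ is a product of commutation phases $e^{2\pi i\theta_{j,l}}$ and hence $|\omega(k,m)|=1$, I would collect terms to find that the coefficient of $V_p$ in $P_\rho(a)$ is $c_p=\sum_m\omega(p-m,m)\,\rho_{p-m}(m)\,a_m$, so that $\|P_\rho(a)\|_0^2=\sum_p|c_p|^2$. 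All rearrangements here are legitimate because $(a_m)$ is Schwartz and the $\rho_k(m)$ decay rapidly in $k$, which is the content of the next step.

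Next I would extract pointwise decay of the scalar Fourier coefficients $\rho_k(\xi)$ from the symbol estimates. Since $\delta_j(V_k)=k_jV_k$, one has $\delta^i\rho(\xi)=\sum_k k^i\rho_k(\xi)V_k$ with $k^i:=\prod_jk_j^{i_j}$, and because $\|\cdot\|$ is the $\tau$-Hilbert norm and the $V_k$ are orthonormal, $\|\delta^i\rho(\xi)\|^2=\sum_k|k^i|^2|\rho_k(\xi)|^2$. Applying the defining inequality of $\rho\in S^d$ (with no $\partial$-derivatives) to the $2^n$ multi-indices $i$ whose entries lie in $\{0,2N\}$ and summing the resulting bounds, one gets for every $N\in\mathbb Z_{\ge0}$ a constant $C_N$ with
$$|\rho_k(\xi)|\le C_N(1+|\xi|)^d\prod_{j=1}^n(1+|k_j|)^{-N}\qquad(k\in\mathbb Z^n,\ \xi\in\mathbb R^n).$$
Combined with $(1+|m|)^d\le C'\lambda(m)^d$, this yields $|\rho_{p-m}(m)|\le C_NC'\,\lambda(m)^d\,g(p-m)$, where $g(k):=\prod_j(1+|k_j|)^{-N}$.

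Setting $b_m:=\lambda(m)^d|a_m|$, the previous bound gives $|c_p|\le C_NC'\sum_m b_m\,g(p-m)=C_NC'(b*g)(p)$ for all $p$, with absolutely convergent sum. Young's inequality for $\ell^2(\mathbb Z^n)*\ell^1(\mathbb Z^n)\to\ell^2(\mathbb Z^n)$ then gives
$$\|P_\rho(a)\|_0=\Big(\sum_p|c_p|^2\Big)^{1/2}\le C_NC'\,\|g\|_{\ell^1}\,\|b\|_{\ell^2}=C_NC'\,\|g\|_{\ell^1}\,\|a\|_d,$$
and for $N\ge2$ one has $\|g\|_{\ell^1}=\big(\sum_{k\in\mathbb Z}(1+|k|)^{-N}\big)^n<\infty$; so the asserted inequality holds with $C:=C_NC'\|g\|_{\ell^1}$. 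Finally, since $H^d$ is by definition the $\|\cdot\|_d$-completion of $A_\theta^\infty$ and $P_\rho:A_\theta^\infty\to H^0$ is bounded for $\|\cdot\|_d$ on its domain, it extends uniquely to a bounded operator $P_\rho:H^d\to H^0$.

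I expect the only genuine work to be the second paragraph: converting the ``operator-side'' symbol bounds into honest pointwise bounds on the $\rho_k(\xi)$, with the correct weight $(1+|\xi|)^d$ and summable decay in $k$, and checking that the commutation phases produced when the generators are returned to standard order are harmless (they are, since only $|c_p|$ enters and $|\omega(k,m)|=1$). The remainder is the elementary arithmetic of convolution on $\mathbb Z^n$ together with the equivalence $1+|\xi|^2\asymp(1+|\xi|)^2$.
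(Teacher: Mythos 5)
Your proof is correct, but it follows a genuinely different route from the paper. The paper's proof reduces to the normalized basis vectors $e_{d,m}=\lambda(m)^{-d}\prod_jU_j^{m_j}$ via the identity $\|P_\rho(a)\|_0^2=\sum_m|a_m|^2\|P_\rho(e_{d,m})\|_0^2$, and then only needs the zeroth-order symbol estimate: $\|P_\rho(e_{d,m})\|_0=\lambda(m)^{-d}\|\rho(m)\|_0\le C_\rho 2^{d/2}$, since right multiplication by the unitary $\prod_jU_j^{m_j}$ only introduces phases and preserves $\|\cdot\|_0$. You instead convert the symbol estimates with $\delta$-derivatives (and no $\partial$-derivatives) into pointwise rapid decay of the scalar coefficients $\rho_k(\xi)$ in $k$, realize $P_\rho$ on Fourier coefficients as a twisted convolution $c_p=\sum_m\omega(p-m,m)\rho_{p-m}(m)a_m$, and close with Young's inequality $\ell^1*\ell^2\to\ell^2$; all of your individual steps (the decay bound from the $2^n$ multi-indices, the unimodularity of the commutation phases, $(1+|m|)^d\asymp\lambda(m)^d$ for any real $d$, and the extension to the completion) check out. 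What each approach buys: the paper's argument is shorter and uses less of the symbol class, but its Pythagorean reduction treats $P_\rho$ as if it were diagonal in the basis $\{\prod_jU_j^{m_j}\}$, which it is not when $\rho(\xi)$ has nontrivial Fourier modes --- the vectors $P_\rho(e_{d,m})$ for different $m$ need not be orthogonal, so that step needs justification; your convolution/Schur-type estimate handles exactly these cross terms, at the modest cost of invoking $\delta$-smoothness of the symbol, and so gives a more robust (and, as written, more complete) proof of Theorem~\ref{ws3}.
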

\begin{proof}
Note that $\{\prod_jU_j^{m_j}:m\in\mathbb Z^n\}$
is an orthogonal basis with respect to
$\langle\cdot,\cdot\rangle_s$ which is orthonormal in the case $s=0$.
We have
$||\prod_jU_j^{m_j}||_0^2=1$,
$||\rho_m(\xi)\prod_jU_j^{m_j}||_0^2=|\rho_m(\xi)|^2$,
and $||\rho(\xi)||_0^2=\sum_m|\rho_m(\xi)|^2$.
Since $\rho$ is of order $d$, we have
$||\rho(\xi)||_0\le C_{\rho}(1+|\xi|)^d$, and
since $(1-|\xi|)^2\ge 0$ gives us $(1+|\xi|)^2\le 2(1+|\xi|^2)$, we have
$$||\rho(\xi)||_0^2\le C_{\rho}^2(1+|\xi|)^{2d}\le C_{\rho}^22^d(1+|\xi|^2)^d.$$
Let $k_{\rho}:=C_{\rho}^22^d$. Then we have
$$\sum_m|\rho_m(\xi)|^2\le k_{\rho}(1+|\xi|^2)^d.$$
Let $e_{s,m}:=(1+|m_1|^2+\cdots+|m_n|^2)^{-s/2}\prod_jU_j^{m_j}$ and
$E_s:=\{e_{s,m}\mid m\in\mathbb Z^n\}$. By definition we have $E_s$
orthonormal with respect to $\langle\cdot,\cdot\rangle_s$. It suffices to
prove this theorem for the case $a=e_{d,m}$ by the orthonormality of $E_d$
since
$$||P_{\rho}(a)||_0^2=\sum_m|a_m|^2||P_{\rho}(e_{d,m})||_0^2$$
and
$$||a||_d^2=\sum_m|a_m|^2||e_{d,m}||_d^2.$$
Since $||e_{d,m}||_d^2=1$, it suffices to show that
$$||P_{\rho}(e_{d,m})||_0^2\le K$$ for some constant $K>0$.
We have
\begin{align*}
||P_{\rho}(e_{d,m})||_0^2
&= ||\rho(m)e_{d,m}||_0^2 \\
&= ||\rho(m)(1+|m_1|^2+\cdots+|m_n|^2)^{-d/2}\prod_{j=1}^nU_j^{m_j}||_0^2 \\
&= \left|\left|\sum_k
\rho_k(m)\prod_{j=1}^nU_j^{k_j}
(1+|m_1|^2+\cdots+|m_n|^2)^{-d/2}\prod_{j=1}^nU_j^{m_j}
\right|\right|_0^2 \\
&= (1+|m_1|^2+\cdots+|m_n|^2)^{-d}\left|\left|\sum_k
\rho_k(m)\prod_{j=1}^nU_j^{k_j}\prod_{j=1}^nU_j^{m_j}\right|\right|_0^2 \\
&= (1+|m_1|^2+\cdots+|m_n|^2)^{-d}\left|\left|\sum_k
\rho_k(m)w(m,k)\prod_j^nU_j^{m_j+k_j}
\right|\right|_0^2 \\
&= (1+|m_1|^2+\cdots+|m_n|^2)^{-d}\left|\left|\sum_{k,\ell}
\rho_{k-m}(m)w(m,k-m)\prod_{j=1}^nU_j^{k_j}
\right|\right|_0^2 \\
&= (1+|m_1|^2+\cdots+|m_n|^2)^{-d}
\sum_k|\rho_{k-m}(m)|^2 \\
&= (1+|m_1|^2+\cdots+|m_n|^2)^{-d}
\sum_{k,\ell}|\rho_k(m)|^2 \\
&\le (1+|m_1|^2+\cdots+|m_n|^2)^{-d}k_{\rho}(1+|m_1|^2+\cdots+|m_n|^2)^d
=k_{\rho}
\end{align*}
where $$w(k,m):=\prod_{j=1}^nU_j^{m_j}\prod_{j=1}^nU_j^{k_j}
\left(\prod_{j=1}^nU_j^{m_j+k_j}\right)\in S^1\subset\mathbb C$$
so our desired constant is $K=k_{\rho}=C_{\rho}^22^d$ and we are done.
\end{proof}
For the general case $s\ne d$ we need to prove a lemma saying that
$||\cdot||_s=||\cdot||_{s-t}\circ P_{\lambda^t}$.
\begin{lem}\label{sn3}
For any $a\in A_{\theta}^{\infty}$ and $s,t\in\mathbb R$,
$a\in H^s$ if and only if
$P_{\lambda^t}(a)\in H^{s-t}$ with $||a||_s=||P_{\lambda^t}(a)||_{s-t}$.
\end{lem}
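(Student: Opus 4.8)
The plan is to reduce the statement to the explicit description of $P_{\lambda^t}$ furnished by Lemma \ref{opn}. First I would observe that $\lambda^t$, regarded as a $C^{\infty}$ map $\mathbb R^n\to A_{\theta}^{\infty}$ with values in the scalars $\mathbb C\cdot 1$, is a symbol of order $t$: being constant in the $A_{\theta}$ directions it satisfies $\delta_k(\lambda^t)=0$ for every $k$, while the Euclidean derivatives of $(1+|\xi|^2)^{t/2}$ obey the standard bound $\|\partial^j\lambda^t\|\le C_j(1+|\xi|)^{t-|j|}$ coming from the chain and Leibniz rules. Hence Lemma \ref{opn} applies to $\rho=\lambda^t$ and yields, for $a=\sum_{m\in\mathbb Z^n}a_m\prod_jU_j^{m_j}\in A_{\theta}^{\infty}$,
$$P_{\lambda^t}(a)=\sum_{m\in\mathbb Z^n}(1+|m_1|^2+\cdots+|m_n|^2)^{t/2}\,a_m\prod_{j=1}^nU_j^{m_j}.$$

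Next I would check that this again lies in $A_{\theta}^{\infty}$: its Fourier coefficients are $b_m:=(1+|m_1|^2+\cdots+|m_n|^2)^{t/2}a_m$, and since the weight $(1+|m|^2)^{t/2}$ together with all its finite differences is polynomially bounded in $m$, multiplying the Schwartz sequence $\{a_m\}$ by it produces again a sequence in $\mathcal S(\mathbb Z^n)$, so $P_{\lambda^t}(a)\in A_{\theta}^{\infty}$. In particular both $a$ and $P_{\lambda^t}(a)$ belong to $A_{\theta}^{\infty}\subseteq H^r$ for every real $r$, so for smooth $a$ the two directions of the asserted equivalence hold automatically; the substantive content is the norm identity. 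For that I would simply substitute the coefficients $b_m$ into the definition of the Sobolev norm:
$$\|P_{\lambda^t}(a)\|_{s-t}^2=\sum_m(1+|m_1|^2+\cdots+|m_n|^2)^{s-t}|b_m|^2=\sum_m(1+|m|^2)^{s-t}(1+|m|^2)^{t}|a_m|^2=\sum_m(1+|m|^2)^{s}|a_m|^2=\|a\|_s^2,$$
and take square roots, giving $\|a\|_s=\|P_{\lambda^t}(a)\|_{s-t}$.

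Finally, should one want the equivalence at the level of the completed spaces rather than only for smooth $a$, I would note that the identity just established exhibits $P_{\lambda^t}$ as an isometry from $(A_{\theta}^{\infty},\|\cdot\|_s)$ into $(A_{\theta}^{\infty},\|\cdot\|_{s-t})$; since on Fourier coefficients it is the diagonal map $\{a_m\}\mapsto\{(1+|m|^2)^{t/2}a_m\}$, which is a bijection of $\mathcal S(\mathbb Z^n)$ with inverse given by $\lambda^{-t}$, its range is dense, and it therefore extends uniquely to an isometric isomorphism $H^s\to H^{s-t}$, which is the claimed "iff". I do not anticipate any real obstacle; the only point needing a line of care is the verification that $\lambda^t$ is a genuine symbol of order $t$ (so that Lemma \ref{opn} is legitimately invoked) and that multiplication by the weight preserves the Schwartz condition, both of which are routine estimates on $(1+|\xi|^2)^{t/2}$ and its derivatives.
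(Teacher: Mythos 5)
Your proposal is correct and follows essentially the same route as the paper: the paper's proof is exactly the coefficient-level computation $\|P_{\lambda^t}(a)\|_{s-t}^2=\sum_m(1+|m|^2)^{s-t}\lambda^{2t}(m)|a_m|^2=\|a\|_s^2$, relying on the formula of Lemma \ref{opn} for $P_{\lambda^t}$. Your additional verifications (that $\lambda^t$ is a symbol of order $t$, that the weighted sequence stays Schwartz, and the isometric extension to the completions) are sound refinements of the same argument rather than a different approach.
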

\begin{proof}
Suppose that $a\in H^s$ or $P_{\lambda^t}(a)\in H^{s-t}$. Then
\begin{align*}
||P_{\lambda^t}(a)||_{s-t}^2
&= \sum_{m}(1+|m_1|^2+\cdots+|m_n|^2)^{s-t}\lambda^{2t}(m)|a_m|^2 \\
&= \sum_{m}(1+|m_1|^2+\cdots+|m_n|^2)^{s-t}(1+|m_1|^2+\cdots+|m_n|^2)^t|a_m|^2\\
&= \sum_{m}(1+|m_1|^2+\cdots+|m_n|^2)^s|a_m|^2 \\
&= ||a||_s
\end{align*}
so we know that $a\in H^s$ and $P_{\lambda^t}(a)\in H^{s-t}$.
\end{proof}
Then the general case follows quite easily.
\begin{cor}
Suppose $\rho\in S^d$. Then $||P_{\rho}(a)||_{s-d}\le C||a||_s$ for
some constant $C>0$ and $P_{\rho}$ defines a bounded operator
$P_{\rho}:H^s\rightarrow H^{s-d}$.
\end{cor}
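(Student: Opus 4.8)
The plan is to reduce the assertion to the already-proved case $s=d$ (Theorem~\ref{ws3}) by conjugating $P_{\rho}$ with the order-shifting operators $P_{\lambda^{t}}$ and invoking the composition formula of Theorem~\ref{prod}. First I would check that $\lambda^{r}=(1+|\xi|^{2})^{r/2}$ is a symbol of order $r$ for every real $r$: since $\lambda^{r}$ is a scalar multiple of the identity in $A_{\theta}^{\infty}$, every derivation $\delta_{j}$ annihilates it, so the required symbol estimate collapses to the classical bound $|\partial^{j}\lambda^{r}|\le C(1+|\xi|)^{r-|j|}$; in particular $\lambda^{s-d}\in S^{s-d}$.

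Next I would apply Theorem~\ref{prod} with $\phi=\lambda^{s-d}$ (of order $s-d$) and $\rho$ (of order $d$) to get $P_{\lambda^{s-d}}\circ P_{\rho}=P_{\mu}$ for some symbol $\mu$ of order $(s-d)+d=s$, so $\mu\in S^{s}$. Then, for $a\in A_{\theta}^{\infty}$, I would chain the estimates: applying Lemma~\ref{sn3} to the element $P_{\rho}(a)$ with Sobolev index $s-d$ and shift $t=s-d$ gives $\|P_{\rho}(a)\|_{s-d}=\|P_{\lambda^{s-d}}(P_{\rho}(a))\|_{0}=\|P_{\mu}(a)\|_{0}$, and then Theorem~\ref{ws3} applied to $\mu\in S^{s}$ (with the role of ``$d$'' there played by $s$) gives $\|P_{\mu}(a)\|_{0}\le C\|a\|_{s}$. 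Combining, $\|P_{\rho}(a)\|_{s-d}\le C\|a\|_{s}$ for all $a\in A_{\theta}^{\infty}$.

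Finally, since $A_{\theta}^{\infty}$ is dense in $H^{s}$ by definition and the inequality just established, together with linearity, makes $P_{\rho}$ uniformly continuous from $(A_{\theta}^{\infty},\|\cdot\|_{s})$ into $H^{s-d}$, it extends uniquely to a bounded operator $P_{\rho}\colon H^{s}\to H^{s-d}$ with the same bound. I do not anticipate a real obstacle here: the only points needing attention are the (routine) verification that $\lambda^{s-d}$ really is a symbol of order $s-d$, and the index bookkeeping that lets Theorem~\ref{ws3} be applied to $P_{\mu}$ rather than to $P_{\rho}$ directly. One could instead bypass Theorem~\ref{prod} and estimate $\|P_{\rho}(a)\|_{s-d}^{2}$ mode-by-mode from Lemma~\ref{opn} exactly as in the proof of Theorem~\ref{ws3}, but routing through the composition formula is shorter and reuses work already done.
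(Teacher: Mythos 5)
Your proposal is correct and follows essentially the same route as the paper: reduce to the $s=d$ case via Lemma~\ref{sn3} (writing $\|P_{\rho}(a)\|_{s-d}=\|P_{\lambda^{s-d}}(P_{\rho}(a))\|_{0}$), invoke Theorem~\ref{prod} to see that $\sigma(P_{\lambda^{s-d}}\circ P_{\rho})$ has order $s$, and then apply Theorem~\ref{ws3}. Your added remarks on $\lambda^{s-d}\in S^{s-d}$ and the density extension to $H^{s}$ are routine details the paper leaves implicit.
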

\begin{proof}
By Lemma \ref{sn3}, we have
$||P_{\rho}(a)||_{s-d}=||P_{\lambda^{s-d}}(P_{\rho}(a))||_0$.
By Theorem \ref{prod}, the symbol
$\sigma(P_{\lambda^{s-d}}\circ P_{\rho})$ is of order $d+(s-d)=s$,
so Theorem \ref{ws3} gives us
$||P_{\lambda^{s-d}}(P_{\rho}(a))||_0\le C||a||_s$ for some constant $C>0$.
\end{proof}
We can also define an analog of the $C^k$ norm for the noncommutative $n$ torus.
\begin{define}
Define the $C^k$ norm
$||\cdot||_{\infty,k}:A_{\theta}^{\infty}\rightarrow\mathbb R_{\ge 0}$
as follows:
$$||a||_{\infty,k}:=\sum_{|\ell|\le k}||\delta^{\ell}(a)||_{C^*}$$
where the $C^*$ norm $||\cdot||_{C^*}$ is given by
$$||a||_{C^*}^2:=\sup\{|\lambda|:a^*a-\lambda\cdot 1\text{ not invertible}\}.$$
\end{define}
Since, for arbitrary $a=\sum_ma_m\prod_jU_j^{m_j}$,
\begin{align*}
D_s^{\ell}\alpha_s(a)
&=(-i\partial_s)^{\ell}\sum_me^{is\cdot m}a_m\prod_{j=1}^nU_j^{m_j} \\
&=\sum_m m^{\ell}e^{is\cdot m}a_m\prod_{j=1}^nU_j^{m_j} \\
&=\sum_m\delta^{\ell}e^{is\cdot m}a_m\prod_{j=1}^nU_j^{m_j} \\
&=\delta^{\ell}\alpha_s(a)
\end{align*}
we have $A_{\theta}^k=C^k$.

We can easily prove an analog of the Sobolev lemma as follows.
\begin{thm}
For $s>k+1$, $H^s\subseteq A_{\theta}^k$.
\end{thm}
\begin{proof}
First consider the case $k=0$.
Note that $||\cdot||_{\infty,0}=||\cdot||_{C^*}$
so for arbitrary $a_m\prod_jU_j^{m_j}$ we have
$$||a_m\prod_{j=1}^nU_j^{m_j}||_{\infty,0}^2=\sup\{|\lambda|:
|a_m|^2-\lambda\cdot 1\text{ not invertible}\}=|a_m|^2$$
and for arbitrary $a=\sum_ma_m\prod_jU_j^{m_j}$ we have
$$||a||_{\infty,0}^2\le\sum_m||a_m\prod_{j=1}^nU_j^{m_j}||_{\infty,0}^2
=\sum_m|a_m|^2=||a||_0^2$$ by the triangle inequality.
We have $$a=\sum_ma_m\lambda^s(m)\lambda^{-s}(m)\prod_{j=1}^nU_j^{m_j}$$
so by the Cauchy-Schwarz inequality we get
$$||a||_0^2\le||a||_s^2\sum_m(1+|m_1|^2+\cdots+|m_n|^2)^{-s}.$$
Since $2s>2$, $(1+|m_1|^2+\cdots+|m_n|^2)^{-s}$ is summable over
$m\in\mathbb Z^n$ and $||a||_0^2\le C||a||_s$. Thus we get
$||a||_{\infty,0}\le||a||_0\le C||a||_s$ and $H^s\subseteq A_{\theta}^0$.

Now suppose $k>0$. Using what we've proven for the previous case, we have
\begin{align*}
||\delta^{\ell}(a)||_{\infty,0} &\le C||\delta^{\ell}(a)||_{s-|\ell|} \\
&= C||\sum_m m^{\ell}a_m\prod_{j=1}^nU_j^{m_j}||_{s-|\ell|} \\
&< C||\sum_m(1+|m_1|^2+\cdots+|m_n|^2)^{|\ell|}
a_m\prod_{j=1}^nU_j^{m_j}||_{s-|\ell|} \\
&= C||P_{\lambda^{|\ell|}}(a)||_{s-|\ell|} \\
&= C||a||_s
\end{align*}
for $|\ell|\le k$ since $s-|\ell|\ge s-k>1$.
Therefore,
$$||a||_{\infty,k}=\sum_{|\ell|\le k}||\delta^{\ell}(a)||_{\infty,0}
\le\sum_{|\ell|\le k}C||a||_s
\le C||a||_s(k+1)(k+2)/2$$
and we get $H^s\subseteq A_{\theta}^k$.
\end{proof}
We get the following corollary.
\begin{cor}
$\displaystyle\bigcap_{s\in\mathbb R}H^s=A_{\theta}^{\infty}$.
\end{cor}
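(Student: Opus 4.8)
The plan is to prove the two inclusions separately. The forward inclusion $\bigcap_{s}H^s\subseteq A_{\theta}^{\infty}$ will be a direct consequence of the Sobolev lemma just proved, and the reverse inclusion will be a routine summability estimate coming from the Schwartz-space description of $A_{\theta}^{\infty}$ recalled in the Preliminaries.

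First I would make the intersection precise. Realizing each $H^s$ concretely as the space of Fourier series $\sum_{m\in\mathbb Z^n}a_m\prod_jU_j^{m_j}$ with $\sum_m(1+|m_1|^2+\cdots+|m_n|^2)^s|a_m|^2<\infty$, the monotonicity of $(1+|m_1|^2+\cdots+|m_n|^2)^s$ in $s$ gives $H^t\subseteq H^s$ whenever $s\le t$, so $\bigcap_{s\in\mathbb R}H^s=\bigcap_{s\ge 0}H^s$; and by the $k=0$ case of the preceding theorem each $H^s$ with $s>1$ embeds in $A_{\theta}^0\subseteq A_{\theta}$, so the intersection is a genuine subset of $A_{\theta}$. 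For the forward inclusion, let $a\in\bigcap_s H^s$ and fix $k\in\mathbb Z_{\ge 0}$; applying the preceding theorem with $s=k+2>k+1$ gives $a\in A_{\theta}^k$. Since $k$ is arbitrary, $a\in\bigcap_k A_{\theta}^k$, and because the orbit map $s\mapsto\alpha_s(a)$ is $C^{\infty}$ exactly when it is $C^k$ for every $k$, this intersection is precisely $A_{\theta}^{\infty}$ by definition.

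For the reverse inclusion, take $a=\sum_{m\in\mathbb Z^n}a_m\prod_jU_j^{m_j}\in A_{\theta}^{\infty}$. By the Schwartz-space characterization of $A_{\theta}^{\infty}$, $\{a_m\}\in\mathcal S(\mathbb Z^n)$; using $\prod_j(1+|m_j|^2)\ge 1+|m_1|^2+\cdots+|m_n|^2$ one converts the coordinatewise suprema into, for each $N\in\mathbb Z_{\ge 0}$, an estimate $|a_m|\le C_N(1+|m_1|^2+\cdots+|m_n|^2)^{-N}$. Then for any fixed $s$, choosing $N$ with $2N-s>n/2$ yields
$$||a||_s^2=\sum_{m\in\mathbb Z^n}(1+|m_1|^2+\cdots+|m_n|^2)^s|a_m|^2\le C_N^2\sum_{m\in\mathbb Z^n}(1+|m_1|^2+\cdots+|m_n|^2)^{s-2N}<\infty,$$
so $a\in H^s$ for every $s$, i.e. $a\in\bigcap_{s\in\mathbb R}H^s$.

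There is no serious obstacle here: the mathematical content is carried by the already-proved Sobolev lemma. The only points to watch are making the intersection well defined inside the common ambient algebra $A_{\theta}$ (handled by the $k=0$ case of the Sobolev lemma) and passing between the coordinatewise and isotropic forms of rapid decay for the coefficient sequence; given these, the forward inclusion is a single invocation of the lemma with $s=k+2$, and the reverse inclusion is the displayed summability estimate.
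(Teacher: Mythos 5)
Your proof is correct, and the forward inclusion is exactly the paper's argument: invoke the Sobolev lemma with $s=k+2>k+1$ for each $k$, then use $\bigcap_k A_{\theta}^k=A_{\theta}^{\infty}$. Where you diverge is the reverse inclusion and the framing of the intersection. The paper disposes of $A_{\theta}^{\infty}\subseteq H^s$ in one line, by noting that $H^s$ is \emph{defined} as the completion of $A_{\theta}^{\infty}$ with respect to $\|\cdot\|_s$, so $A_{\theta}^{\infty}$ sits inside every $H^s$ tautologically; you instead verify membership analytically, deriving $|a_m|\le C_N(1+|m_1|^2+\cdots+|m_n|^2)^{-N}$ from the Schwartz characterization and summing with $2N-s>n/2$. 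The two are consistent: the completion argument implicitly requires that $\|a\|_s<\infty$ for every $a\in A_{\theta}^{\infty}$ (equivalently, that $P_{\lambda^s}$ preserves $A_{\theta}^{\infty}$, which again comes down to rapid decay of the coefficients), and your estimate is precisely the substance behind that, so your route is more self-contained at the cost of a computation the paper regards as built into the definitions. You also add a point the paper passes over silently: to intersect the abstract completions $H^s$ one should realize them concretely as weighted $\ell^2$ spaces of Fourier coefficients, note the monotonicity $H^t\subseteq H^s$ for $s\le t$, and use the $k=0$ Sobolev embedding to place everything inside $A_{\theta}$; this is a genuine (if routine) tightening of the statement's meaning rather than a departure in method. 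Both proofs are sound; yours buys explicitness, the paper's buys brevity.
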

\begin{proof}
Suppose $a\in\bigcap_{s\in\mathbb R}H^s$. Then for any
$k\in\mathbb Z_{\ge 0}$, $a\in H^{k+2}$, so by the theorem we just proved,
$a\in A_{\theta}^k$. Consequently $a\in A_{\theta}^{\infty}$, so
$\bigcap_{s\in\mathbb R}H^s\subseteq A_{\theta}^{\infty}$.

Suppose $a\in A_{\theta}^{\infty}$. Then since
$H^s$ is the completion of $A_{\theta}^{\infty}$ with respect to $||\cdot||_s$,
$A_{\theta}^{\infty}\subseteq H^s$ for all $s\in\mathbb R$, and
$A_{\theta}^{\infty}\subseteq\bigcap_{s\in\mathbb R}H^s$.
\end{proof}
We can also prove an analog of the Rellich lemma for the noncommutative
$n$ torus.
\begin{thm}
Let $\{a_N\}\in A_{\theta}^{\infty}$ be a sequence. Suppose that there
is a constant $C$ so that $||a_N||_s\le C$ for all $N$. Let $s>t$. Then
there is a subsequence $\{a_{N_j}\}$ that converges in $H^t$.
\end{thm}
\begin{proof}
Let $e_{s,m}:=(1+|m_1|^2+\cdots+|m_n|^2)^{-s/2}\prod_{j=1}^nU_j^{m_j}$ and
$E_s:=\{e_{s,m}\mid m\in\mathbb Z^n\}$. $E_s$ is an orthonormal basis
with respect to $\langle\cdot,\cdot\rangle_s$, so we can write
$a_N:=\sum_ka_{N,k}e_{s,k}$. Then
$$|a_{N,k}|^2\le\sum_k|a_{N,k}|^2\le C^2$$
and $|a_{N,k}|\le C$.
Applying the Arzela-Ascoli theorem to $\{a_{N,k}\}$ for some fixed
$k$, we can get a subsequence $\{a_{N_j,k}\}$ of $\{a_{N,k}\}$
such that for any $\epsilon>0$ there exists $M(\epsilon)\in\mathbb N$
such that $|a_{N_i,k}-a_{N_j,k}|<\epsilon$ whenever
$i,j\ge M(\epsilon)$.
Do this for all $|k_1|^2+\cdots+|k_n|^2\le r$,
replacing $\{a_N\}$ with $\{a_{N_j}\}$
each time. Then we get a subsequence $\{a_{N_j}\}$ of $\{a_N\}$
such that for any $\epsilon>0$ there exists $M(\epsilon)\in\mathbb N$
such that, for all $|k_1|^2+\cdots+|k_n|^2\le r$,
$|a_{N_i,k}-a_{N_j,k}|<\epsilon$ whenever $i,j\ge M(\epsilon)$.
Now consider the sum $$||a_{N_i}-a_{N_j}||_t^2
=\sum_k|a_{N_i,k}-a_{N_j,k}|^2(1+|k_1|^2+\cdots+|k_n|^2)^{t-s}.$$
Decompose it into two parts: one where $|k_1|^2+\cdots+|k_n|^2>r^2$ and one
where $|k_1|^2+\cdots+|k_n|^2\le r$. On $|k_1|^2+\cdots+|k_n|^2>r^2$ we estimate
$$(1+|k_1|^2+\cdots+|k_n|^2)^{t-s}<(1+r^2)^{t-s}$$ so that
\begin{align*}
\sum_{|k_1|^2+\cdots+|k_n|^2\ge r^2}|a_{N_i,k}-a_{N_j,k}|^2
(1+|k_1|^2+\cdots+|k_n|^2)^{t-s}
&<(1+r^2)^{t-s}\sum_k|a_{N_i,k}-a_{N_j,k}|^2 \\
&\le2C^2(1+r^2)^{t-s}.
\end{align*}
If $\epsilon>0$ is given, we choose $r$ so that $2C^2(1+r^2)^{t-s}<\epsilon$.
The remaining part of the sum is over $|k_1|^2+\cdots+|k_n|^2\le r^2$ and
can be bounded above by
$\epsilon':=\epsilon-2C^2(1+r^2)^{t-s}$ if
$i,j\ge M(\sqrt[n]{\epsilon'}/(2r+1))$
because a ball of radius $r$ centered at the origin is contained in a cube
of side length $2r$ that has $(2r+1)^n$ lattice points.
Then the total sum is bounded above by $\epsilon$, and we are done.
\end{proof}
\section{The pseudodifferential calculus on finitely generated projective
modules over the noncommutative $n$ torus}
We can generalize these results to arbitrary finitely generated projective
right modules over the noncommutative $n$ torus following p.~553 of \cite{ncg},
which considers finitely generated projective modules over an arbitrary unital
$*$-algebra. Let $E$ be a finitely generated projective right
$A_{\theta}^{\infty}$-module. Since $E$ is a finitely generated
projective right $A_{\theta}^{\infty}$-module, we can write $E$ as a direct
summand $E=(A_{\theta}^{\infty})^re$ of a free module $(A_{\theta}^{\infty})^r$
with direct complement
$F=(A_{\theta}^{\infty})^r(\mathrm{id}-e)$,
where the idempotent $e\in M_r(A_{\theta}^{\infty})$ is self-adjoint.
Consider an $r\times r$ matrix valued symbol $\rho=(\rho_{j,k})$ where
$\rho_{j,k}:\mathbb R^n\rightarrow A_{\theta}^{\infty}$ are scalar symbols
and $\rho_{j,k}\in S^d$. Define the operator $P_{\rho}:E\rightarrow E$ as
follows:
$$P_{\rho}(\vec{a}):=(2\pi)^{-n}\int_{\mathbb R^n}\!\int_{\mathbb R^n}\!
e^{-is\cdot\xi}\rho(\xi)\alpha_s(\vec{a})\,\mathrm ds\,\mathrm d\xi.$$
Define the inner product
$\langle\vec{a},\vec{b}\rangle:E\times E\rightarrow\mathbb C$
sending $(\vec{a},\vec{b})\mapsto\tau(\vec{b}^*\vec{a})$.
Since $$P_{\rho}(\vec{a})_j
=(2\pi)^{-n}\int_{\mathbb R^n}\!\int_{\mathbb R^n}\!e^{-is\cdot\xi}
\sum_{k=1}^r\rho_{j,k}(\xi)\alpha_s(a_k)\,\mathrm ds\,\mathrm d\xi,$$
Lemma \ref{opn} generalizes to $E$ as follows after applying it to each
component:
$$P_{\rho}(\vec{a})=\sum_m\rho(m)\vec{a}_m\prod_{j=1}^nU_j^{m_j}.$$
Theorems \ref{adj} and \ref{prod} generalize as follows.
\begin{thm}\label{mvs2}
\begin{enumerate}[(a)]
\item For a pseudodifferential operator $P$ with $r\times r$ matrix valued
symbol $\sigma(P)=\rho=\rho(\xi)$, the
symbol of the adjoint $P^*$ satisfies
$$\sigma(P^*)\sim\sum_{(\ell_1,\ldots,\ell_n)\in(\mathbb Z_{\ge 0})^n}
\frac{\partial_1^{\ell_1}\cdots\partial_n^{\ell_n}
\delta_1^{\ell_1}\cdots\delta_n^{\ell_n}(\rho(\xi))^*}{\ell_1!\cdots\ell_n!}.$$
\item If $Q$ is a pseudodifferential operator with $r\times r$ matrix valued symbol
$\sigma(Q)=\rho'=\rho'(\xi)$,
then the product $PQ$ is also a pseudodifferential operator and has symbol
$$\sigma(PQ)\sim\sum_{(\ell_1,\ldots,\ell_n)\in(\mathbb Z_{\ge 0})^n}
\frac{\partial_1^{\ell_1}\cdots\partial_n^{\ell_n}(\rho(\xi))
\delta_1^{\ell_1}\cdots\delta_n^{\ell_n}(\rho'(\xi))}{\ell_1!\cdots\ell_n!}.$$
\end{enumerate}
\end{thm}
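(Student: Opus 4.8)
The plan is to reduce both parts to the scalar statements already established in Theorem \ref{adj} and Theorem \ref{prod} by working one matrix entry at a time. Write $\rho=(\rho_{j,k})$ and $\rho'=(\rho'_{k,l})$ with every entry a scalar symbol, $\rho_{j,k}\in S^{d}$ and $\rho'_{k,l}\in S^{d'}$, so that each $P_{\rho_{j,k}}$ and each $P_{\rho'_{k,l}}$ is an ordinary pseudodifferential operator on $A_{\theta}^{\infty}$. The matrix operator acts componentwise, $(P_{\rho}\vec{a})_j=\sum_{k=1}^{r}P_{\rho_{j,k}}(a_k)$ --- this is exactly how Lemma \ref{opn} was already promoted to $E$ --- and the inner product on $E$ is the restriction of the one on $(A_{\theta}^{\infty})^{r}$, which splits coordinatewise as $\langle\vec{a},\vec{b}\rangle=\tau(\vec{b}^{*}\vec{a})=\sum_{j=1}^{r}\tau(b_{j}^{*}a_{j})=\sum_{j=1}^{r}\langle a_{j},b_{j}\rangle$. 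This splitting is what lets the scalar theorems be applied slot by slot. Throughout put $\ell!:=\ell_{1}!\cdots\ell_{n}!$.

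For part (a), I would compute, for $\vec{a},\vec{b}\in E$,
$$\langle P_{\rho}\vec{a},\vec{b}\rangle=\sum_{j}\langle(P_{\rho}\vec{a})_{j},b_{j}\rangle=\sum_{j,k}\langle P_{\rho_{j,k}}(a_{k}),b_{j}\rangle=\sum_{j,k}\langle a_{k},P_{\rho_{j,k}}^{*}(b_{j})\rangle=\sum_{k}\Big\langle a_{k},\sum_{j}P_{\rho_{j,k}}^{*}(b_{j})\Big\rangle,$$
where the third equality is Theorem \ref{adj}. Comparing with $\langle\vec{a},P^{*}\vec{b}\rangle=\sum_{k}\langle a_{k},(P^{*}\vec{b})_{k}\rangle$ forces $(P^{*}\vec{b})_{k}=\sum_{j}P_{\rho_{j,k}}^{*}(b_{j})$, i.e. $P^{*}$ is the matrix-symbol operator whose $(k,j)$ entry is $\sigma(P_{\rho_{j,k}}^{*})\sim\sum_{\ell}\partial^{\ell}\delta^{\ell}[(\rho_{j,k}(\xi))^{*}]/\ell!$, which is of order $d$. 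Since the $(k,j)$ entry of the matrix adjoint $(\rho(\xi))^{*}$ is precisely $(\rho_{j,k}(\xi))^{*}$, and $\partial^{\ell}$, $\delta^{\ell}$ act entrywise, these entries reassemble to $\sigma(P^{*})\sim\sum_{\ell}\partial^{\ell}\delta^{\ell}(\rho(\xi))^{*}/\ell!$, as claimed.

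For part (b), I would expand $(P_{\rho}P_{\rho'}\vec{a})_{j}=\sum_{k,l}P_{\rho_{j,k}}\big(P_{\rho'_{k,l}}(a_{l})\big)$. By Theorem \ref{prod}, each composite $P_{\rho_{j,k}}\circ P_{\rho'_{k,l}}$ is a scalar pseudodifferential operator of order $d+d'$ with symbol asymptotic to $\sum_{\ell}\frac{1}{\ell!}\partial^{\ell}\rho_{j,k}(\xi)\,\delta^{\ell}\rho'_{k,l}(\xi)$; summing over $k$ gives, for the $(j,l)$ entry, $\sum_{\ell}\frac{1}{\ell!}\sum_{k}\partial^{\ell}\rho_{j,k}(\xi)\,\delta^{\ell}\rho'_{k,l}(\xi)=\sum_{\ell}\frac{1}{\ell!}\big(\partial^{\ell}\rho(\xi)\,\delta^{\ell}\rho'(\xi)\big)_{j,l}$, the last step being matrix multiplication. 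Hence $P_{\rho}P_{\rho'}=P_{\mu}$ with $\mu\sim\sum_{\ell}\partial^{\ell}\rho\,\delta^{\ell}\rho'/\ell!$ of order $d+d'$, so $PQ$ is again a pseudodifferential operator with the asserted symbol; note that the left-to-right order of the scalar product inside each entry matches the row-by-column order of the matrix product, which is why no transpose intervenes here, in contrast to part (a). Apart from this bookkeeping, the only point that genuinely needs a word is module-theoretic: one must check that $P_{\rho}$ maps $E$ into $E$ and that forming the adjoint within $E$, with the restricted inner product, is compatible with the entrywise scalar adjoints. This is automatic because $E=(A_{\theta}^{\infty})^{r}e$ is cut out by the self-adjoint idempotent $e$, with orthogonal complement $F=(A_{\theta}^{\infty})^{r}(\mathrm{id}-e)$, so the $E$-adjoint is simply the compression to $E$ of the free-module adjoint; I expect no real obstacle beyond these consistency checks.
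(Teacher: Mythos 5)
Your proposal is correct and takes essentially the same approach as the paper: the paper likewise proves both parts by reducing to the scalar Theorems \ref{adj} and \ref{prod} entry by entry, the only difference being that it re-runs the first few oscillatory-integral manipulations with matrix-valued symbols before passing to the $r=1$ case, whereas you invoke the scalar results immediately via the componentwise action $(P_{\rho}\vec{a})_j=\sum_k P_{\rho_{j,k}}(a_k)$ and the coordinatewise splitting of $\langle\cdot,\cdot\rangle$. Your closing consistency checks concerning the idempotent $e$ are treated at the same (informal) level in the paper, so no gap arises there relative to the paper's own argument.
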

\begin{proof}
First let's prove part (a). Let $\rho$ be an $r\times r$ matrix valued symbol
of order $M$ and $\vec{a},\vec{b}\in E$. We have
\begin{align*}
\langle P_{\rho}(\vec{a}),\vec{b}\rangle
&= \tau\left(\vec{b}^*\frac{1}{(2\pi)^n}\int_{\mathbb R^n}\!\int_{\mathbb R^n}\!
e^{-is\cdot\xi}\rho(\xi)\alpha_s(\vec{a})\,\mathrm ds\,\mathrm d\xi\right) \\
&= \tau\left(\left(\frac{1}{(2\pi)^n}\int_{\mathbb R^n}\!\int_{\mathbb R^n}\!
e^{+is\cdot\xi}\alpha_{-s}(\rho(\xi)^*\vec{b})\,\mathrm ds\,\mathrm d\xi
\right)^*\vec{a}\right) \\
&= \langle\vec{a},P_{\rho}^*(\vec{b})\rangle
\end{align*}
where
\begin{align*}
P_{\rho}^*(\vec{b})_j
&= \frac{1}{(2\pi)^n}\int_{\mathbb R^n}\!\int_{\mathbb R^n}\!e^{+is\cdot\xi}
\sum_{k=1}^r\alpha_{-s}(\rho(\xi)_{j,k}^*b_k)\,\mathrm ds\,\mathrm d\xi \\
&= \frac{1}{(2\pi)^n}\int_{\mathbb R^n}\!\int_{\mathbb R^n}\!e^{+is\cdot\xi}
\sum_{k=1}^r\alpha_{-s}(\rho(\xi)_{j,k}^*)\alpha_{-s}(b_k)
\,\mathrm ds\,\mathrm d\xi \\
&= \sum_{m,p}(\rho_m(p-m)\prod_{h=1}^nU_h^{m_h})^*(b_p\prod_{h=1}^nU_h^{p_h})
\end{align*}
so
\begin{align*}
\sigma(P_{\rho}^*)(\xi)
&= \left[\sum_m\rho_m(\xi-m)\prod_{h=1}^nU_h^{m_h}\right]^* \\
&= \left[\frac{1}{(2\pi)^n}\int_{\mathbb R^n}\!\int_{\mathbb R^n}\!
e^{-ix\cdot y}\sum_m\rho_m(\xi-y)\alpha_x\left(\prod_{h=1}^nU_h^{m_h}\right)
\,\mathrm dx\,\mathrm dy\right]^* \\
&= \left[\frac{1}{(2\pi)^n}\int_{\mathbb R^n}\!\int_{\mathbb R^n}\!
e^{-ix\cdot y}\alpha_x(\rho(\xi-y))\,\mathrm dx\,\mathrm dy\right]^*.
\end{align*}
The rest of the proof reduces to the $r=1$ case, applying it to each entry
in $\rho=(\rho_{j,k})$.

We proceed to part (b). Let $\rho$ be an $r\times r$ matrix valued symbol
of order $m_1$ and $\phi$ be an $r\times r$ matrix valued symbol of order
$m_2$. Let $\{\varphi_k\}$ be a partition of unity and define
$\phi_k(\xi):=\phi(\xi)\varphi_k(\xi)$. Let
$\vec{a}\in E$. We have
\begin{align*}
P_{\phi_k}(P_{\rho}(\vec{a})) &=
\frac{1}{(2\pi)^n}\int_{\mathbb R^n}\!\int_{\mathbb R^n}\!
e^{-is\cdot\xi}\phi_k(\xi)\alpha_s(P_{\rho}(\vec{a}))
\,\mathrm ds\,\mathrm d\xi \\
&= \frac{1}{(2\pi)^n}\int_{\mathbb R^n}\!\int_{\mathbb R^n}\!
e^{-is\cdot\xi}\phi_k(\xi)\alpha_s\left(
\frac{1}{(2\pi)^n}\int_{\mathbb R^n}\!\int_{\mathbb R^n}\!e^{-it\cdot\eta}
\rho(\eta)\alpha_t(\vec{a})\,\mathrm dt\,\mathrm d\eta\right)
\,\mathrm ds\,\mathrm d\xi \\
&= \frac{1}{(2\pi)^{2n}}\int_{\mathbb R^n}\!\int_{\mathbb R^n}\!
\left\{\int_{\mathbb R^n}\!\int_{\mathbb R^n}\!
e^{-is\cdot\xi-it\cdot\eta}\phi_k(\xi)\alpha_s(\rho(\eta))\alpha_{s+t}(\vec{a})
\,\mathrm dt\,\mathrm d\eta\right\}\,\mathrm ds\,\mathrm d\xi \\
&= \frac{1}{(2\pi)^{2n}}\int_{\mathbb R^n}\!\int_{\mathbb R^n}\!
\left\{\int_{\mathbb R^n}\!\int_{\mathbb R^n}\!
e^{-ix\cdot\xi-i(y-x)\cdot\eta}\phi_k(\xi)\alpha_x(\rho(\eta))\alpha_y(\vec{a})
\,\mathrm dy\,\mathrm d\eta\right\}\,\mathrm dx\,\mathrm d\xi \\
&= \frac{1}{(2\pi)^{2n}}\int_{\mathbb R^n}\!\int_{\mathbb R^n}\!
\left\{\int_{\mathbb R^n}\!\int_{\mathbb R^n}\!
e^{-ix\cdot(\xi-\eta)-iy\cdot\eta}
\phi_k(\xi)\alpha_x(\rho(\eta))\alpha_y(\vec{a})
\,\mathrm dy\,\mathrm d\eta\right\}\,\mathrm dx\,\mathrm d\xi \\
&= \frac{1}{(2\pi)^{2n}}\int_{\mathbb R^n}\!\int_{\mathbb R^n}\!
\left\{\int_{\mathbb R^n}\!\int_{\mathbb R^n}\!
e^{-ix\cdot\sigma-iy\cdot\eta}
\phi_k(\sigma+\tau)\alpha_x(\rho(\tau))\alpha_y(\vec{a})
\,\mathrm dy\,\mathrm d\tau\right\}\,\mathrm dx\,\mathrm d\sigma \\
&= \frac{1}{(2\pi)^{2n}}\int_{\mathbb R^n}\!\int_{\mathbb R^n}\!e^{-iy\cdot\tau}
\left\{\int_{\mathbb R^n}\!\int_{\mathbb R^n}\!e^{-ix\cdot\sigma}
\phi_k(\sigma+\tau)\alpha_x(\rho(\tau))\,\mathrm dx\,\mathrm d\sigma
\right\}\alpha_y(\vec{a})\,\mathrm dy\,\mathrm d\tau \\
&= \frac{1}{(2\pi)^n}\int_{\mathbb R^n}\!\int_{\mathbb R^n}\!e^{-iy\cdot\tau}
\lambda_k(\tau)\alpha_y(\vec{a})\,\mathrm dy\,\mathrm d\tau
\end{align*}
where
$$\lambda_k(\tau)=\frac{1}{(2\pi)^n}\int_{\mathbb R^n}\!\int_{\mathbb R^n}\!
e^{-ix\cdot\sigma}\phi_k(\sigma+\tau)\alpha_x(\rho(\tau))
\,\mathrm dx\,\mathrm d\sigma$$
so
$$P_{\phi}(P_{\rho}(\vec{a}))
=\frac{1}{(2\pi)^n}\int_{\mathbb R^n}\!\int_{\mathbb R^n}\!e^{-iy\cdot\tau}
\lambda(\tau)\alpha_y(\vec{a})\,\mathrm dy\,\mathrm d\tau$$
where $\lambda(\tau)=\sum_{k=0}^{\infty}\lambda_k(\tau)$.

Let $$\lambda_k(\xi):=\frac{1}{(2\pi)^n}\int_{\mathbb R^n}\!\int_{\mathbb R^n}\!
e^{-ix\cdot y}\phi_k(\xi+y)\alpha_x(\rho(\xi))\,\mathrm dx\,\mathrm dy.$$
Since
\begin{align*}
\lambda_k(\xi)_{\alpha,\gamma}
&= \frac{1}{(2\pi)^n}\int_{\mathbb R^n}\!\int_{\mathbb R^n}\!e^{-ix\cdot y}
\sum_{\beta=1}^r\phi_k(\xi+y)_{\alpha,\beta}\alpha_x(\rho(\xi)_{\beta,\gamma})
\,\mathrm dx\,\mathrm dy \\
&= \sum_{\beta=1}^r
\frac{1}{(2\pi)^n}\int_{\mathbb R^n}\!\int_{\mathbb R^n}\!e^{-ix\cdot y}
\phi_k(\xi+y)_{\alpha,\beta}\alpha_x(\rho(\xi)_{\beta,\gamma})
\,\mathrm dx\,\mathrm dy,
\end{align*}
the rest of the proof reduces to the $r=1$ case, applying it to each summand
in the above sum.
\end{proof}
Let $\lambda(\xi)=(1+\xi_1^2+\cdots+\xi_n^2)^{1/2}\mathrm{id}_E$.
Consider the following inner product on $E$.
\begin{define}
Define the Sobolev inner product
$\langle\cdot,\cdot\rangle_s:E\times E\rightarrow\mathbb C$ by
$$\langle\vec{a},\vec{b}\rangle_s:=
\langle P_{\lambda^s}(\vec{a}),P_{\lambda^s}(\vec{b})\rangle
=\sum_{j,m}(1+|m_1|^2+\cdots+|m_n|^2)^s\overline{b_{j,m}}a_{j,m}.$$
\end{define}
Note that for $s=0$ this agrees with
$\langle\cdot,\cdot\rangle$.
This inner product induces the following norm.
\begin{define}
Define the Sobolev norm $||\cdot||_s:E\rightarrow\mathbb R_{\ge 0}$ by
$$||\vec{a}||_s^2:=\langle P_{\lambda^s}(\vec{a}),P_{\lambda^s}(\vec{a})\rangle
=\sum_{j,m}(1+|m_1|^2+\cdots+|m_n|^2)^s|a_{j,m}|^2.$$
\end{define}
Using this norm, we can define the analog of Sobolev spaces on $E$.
\begin{define}
Define the Sobolev space $H^s$ to be the completion of $E$
with respect to $||\cdot||_s$.
\end{define}
We can prove that a pseudo-differential operator of order $d\in\mathbb R$
continuously maps $H^s$ into $H^{s-d}$. However we must first prove the case
where $s=d$.
\begin{thm}\label{ws4}
Suppose $\rho$ is a matrix valued symbol of order $d$.
Then, for any $\vec{a}\in E$, $||P_{\rho}(\vec{a})||_0\le C||\vec{a}||_d$ for
some constant $C>0$ and $P_{\rho}$ defines a bounded operator
$P_{\rho}:H^d\rightarrow H^0$.
\end{thm}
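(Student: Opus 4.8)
The plan is to reduce the matrix-valued case to the scalar case already established in Theorem \ref{ws3}. First I would record, exactly as in the scalar situation, that $\{\vec{e}_h\prod_j U_j^{m_j}: 1\le h\le r,\ m\in\mathbb Z^n\}$ (where $\vec{e}_h$ is the $h$-th standard basis vector of $(A_\theta^\infty)^r$) is an orthogonal family with respect to $\langle\cdot,\cdot\rangle_0$, and that the rescaled vectors $e_{d,h,m}:=(1+|m_1|^2+\cdots+|m_n|^2)^{-d/2}\vec{e}_h\prod_jU_j^{m_j}$ form an orthonormal family with respect to $\langle\cdot,\cdot\rangle_d$. Since both norms decompose as sums over these basis vectors, by linearity and orthonormality it suffices to prove $\|P_\rho(e_{d,h,m})\|_0^2\le K$ for a constant $K$ independent of $h$ and $m$, just as in the proof of Theorem \ref{ws3}.

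Next I would use the generalization of Lemma \ref{opn} to $E$ recorded above, namely $P_\rho(\vec{a})=\sum_m\rho(m)\vec{a}_m\prod_jU_j^{m_j}$, to compute $P_\rho(e_{d,h,m})=(1+|m_1|^2+\cdots+|m_n|^2)^{-d/2}\rho(m)\vec{e}_h\prod_jU_j^{m_j}$. Writing out the matrix action, the $\alpha$-th component of $\rho(m)\vec{e}_h$ is the scalar $\rho_{\alpha,h}(m)\in A_\theta^\infty$, so
\begin{align*}
\|P_\rho(e_{d,h,m})\|_0^2
&=(1+|m_1|^2+\cdots+|m_n|^2)^{-d}\sum_{\alpha=1}^r
\big\|\rho_{\alpha,h}(m)\textstyle\prod_jU_j^{m_j}\big\|_0^2 \\
&=(1+|m_1|^2+\cdots+|m_n|^2)^{-d}\sum_{\alpha=1}^r\|\rho_{\alpha,h}(m)\|_0^2,
\end{align*}
where the last equality uses that right multiplication by the unitary $\prod_jU_j^{m_j}$ preserves $\|\cdot\|_0$, exactly the identity already exploited in Theorem \ref{ws3}. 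Each scalar symbol $\rho_{\alpha,h}$ is of order $d$, so $\|\rho_{\alpha,h}(m)\|_0\le C_{\alpha,h}(1+|m|)^d$, and the same elementary inequality $(1+|m|)^2\le 2(1+|m|^2)$ used before yields $\|\rho_{\alpha,h}(m)\|_0^2\le 2^d C_{\alpha,h}^2(1+|m_1|^2+\cdots+|m_n|^2)^d$. Summing over the finitely many $\alpha$ and absorbing constants gives $\|P_\rho(e_{d,h,m})\|_0^2\le 2^d\sum_{\alpha,h}C_{\alpha,h}^2=:K$, independent of $h$ and $m$, which completes the estimate.

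Finally, from $\|P_\rho(\vec{a})\|_0^2=\sum_{h,m}|a_{h,m}|^2\|P_\rho(e_{d,h,m})\|_0^2\le K\sum_{h,m}|a_{h,m}|^2=K\|\vec{a}\|_d^2$ we obtain the bound with $C=\sqrt{K}$, and density of $E$ in $H^d$ extends $P_\rho$ to a bounded operator $H^d\to H^0$. I do not anticipate a serious obstacle here: the only point requiring a little care is confirming that restricting $P_\rho$ to a single basis vector $e_{d,h,m}$ genuinely reduces to $r$ independent scalar computations — i.e. that the matrix symbol applied to $\vec{e}_h$ picks out the single column $(\rho_{\alpha,h})_\alpha$ and that cross terms between distinct components $\alpha$ vanish under $\langle\cdot,\cdot\rangle_0$ — but this is immediate from orthogonality of distinct standard basis vectors of $(A_\theta^\infty)^r$. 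Everything else is a transcription of Theorem \ref{ws3}.
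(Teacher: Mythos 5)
Your proposal is correct and takes essentially the same route as the paper's proof: reduce by orthonormality to the rescaled basis vectors, apply the module version of Lemma \ref{opn}, and use the entry-wise order-$d$ symbol estimates to obtain a uniform constant (the paper's is $rk_{\rho}=rC_{\rho}^2 2^d$), exactly mirroring Theorem \ref{ws3}. The only cosmetic differences are that you work with the standard free-module basis and invoke unitary invariance of the trace norm, where the paper uses a normalized ``eigenbasis'' of the idempotent $e$ and an explicit Fourier re-indexing with the commutation factor $w(m,k)$; both versions also rely on the same implicit step, inherited from the paper's scalar proof, that $\|P_{\rho}(\vec a)\|_0^2$ splits as a sum over the basis vectors.
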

\begin{proof}
Let $F:=\{f_j:1\le j\le r\}$ be an orthogonal eigenbasis of $e$ normalized
with respect to $\langle\cdot,\cdot\rangle$.
Note that $\{\prod_gU_g^{m_g}f_j:m\in\mathbb Z^n,1\le j\le r\}$
is an orthogonal basis of $E$
considered as a $\mathbb C$-vector space, with respect to
$\langle\cdot,\cdot\rangle_s$. We have
$||\prod_gU_g^{m_g}f_j||_0^2=1$,
$||\rho_m(\xi)_{h,j}\prod_gU_g^{m_g}f_j||_0^2=|\rho_m(\xi)_{h,j}|^2$,
and $||\rho(\xi)_{h,j}f_j||_0^2=\sum_m|\rho_m(\xi)_{h,j}|^2$.
Since $\rho_{h,j}$ is of order $d$, we have
$||\rho(\xi)_{h,j}||_0\le C_{\rho}(1+|\xi|)^d$, and
since $(1-|\xi|)^2\ge 0$ gives us $(1+|\xi|)^2\le 2(1+|\xi|^2)$, we have
$$||\rho(\xi)_{h,j}||_0^2
\le C_{\rho}^2(1+|\xi|)^{2d}\le C_{\rho}^22^d(1+|\xi|^2)^d.$$
Let $k_{\rho}:=C_{\rho}^22^d$. Then we have
$$\sum_m|\rho_m(\xi)_{h,j}|^2\le k_{\rho}(1+|\xi|^2)^d.$$
Let $e_{s,m}:=(1+|m_1|^2+\cdots+|m_n|^2)^{-s/2}\prod_gU_g^{m_g}$ and
$E_s:=\{e_{s,m}\mid n,m\in\mathbb Z\}$. By definition we have $E_sF$
orthonormal with respect to $\langle\cdot,\cdot\rangle_s$. It suffices to
prove this theorem for the case $\vec{a}=e_{d,m}f_j$
by the orthonormality of $E_dF$
since
$$||P_{\rho}(\vec{a})||_0^2=\sum_{j,m}|a_{j,m}|^2
||P_{\rho}(e_{d,m}f_j)||_0^2$$
and
$$||\vec{a}||_d^2=\sum_{j,m}|a_{j,m}|^2||e_{d,m}f_j||_d^2.$$
Since $||e_{d,m}f_j||_d^2=1$, it suffices to show that
$$||P_{\rho}(e_{d,m}f_j)||_0^2\le K$$ for some constant $K>0$.
We have
\begin{align*}
||P_{\rho}(e_{d,m}f_j)||_0^2
&= ||\rho(m)e_{d,m}f_j||_0^2 \\
&= ||\rho(m)(1+|m_1|^2+\cdots+|m_n|^2)^{-d/2}\prod_gU_g^{m_g}f_j||_0^2 \\
&= \left|\left|\sum_k
\rho_k(m)\prod_gU_g^{k_g}(1+|m_1|^2+\cdots+|m_n|^2)^{-d/2}\prod_gU_g^{m_g}f_j
\right|\right|_0^2 \\
&= (1+|m_1|^2+\cdots+|m_n|^2)^{-d}\left|\left|\sum_k
\rho_k(m)\prod_gU_g^{k_g}\prod_gU_g^{m_g}f_j\right|\right|_0^2 \\
&= (1+|m_1|^2+\cdots+|m_n|^2)^{-d}\left|\left|\sum_k
\rho_k(m)w(m,k)\prod_gU_g^{k_g+m_g}f_j
\right|\right|_0^2 \\
&= (1+|m_1|^2+\cdots+|m_n|^2)^{-d}\left|\left|\sum_k
\rho_{k-m}(m)w(m,k-m)\prod_gU_g^{k_g} f_j
\right|\right|_0^2 \\
&= (1+|m_1|^2+\cdots+|m_n|^2)^{-d}\left|\left|\sum_{h,k}
\rho_{k-m}(m)_{h,j}w(m,k-m)\prod_gU_g^{k_g} f_j
\right|\right|_0^2 \\
&= (1+|m_1|^2+\cdots+|m_n|^2)^{-d}
\sum_{h,k}|\rho_{k-m}(m)_{h,j}|^2 \\
&= (1+|m_1|^2+\cdots+|m_n|^2)^{-d}
\sum_{h,k}|\rho_k(m)_{h,j}|^2 \\
&\le (1+|m_1|^2+\cdots+|m_n|^2)^{-d}rk_{\rho}(1+|m_1|^2+\cdots+|m_n|^2)^d
=rk_{\rho}
\end{align*}
where $$w(k,m):=\prod_{j=1}^nU_j^{m_j}\prod_{j=1}^nU_j^{k_j}
\left(\prod_{j=1}^nU_j^{m_j+k_j}\right)\in S^1\subset\mathbb C$$
so our desired constant is $K=rk_{\rho}=rC_{\rho}^22^d$ and we are done.
\end{proof}
For the general case $s\ne d$ we need to prove a lemma saying that
$||\cdot||_s=||\cdot||_{s-t}\circ P_{\lambda^t}$.
\begin{lem}\label{swn4}
For any $\vec{a}\in E$ and $s,t\in\mathbb R$,
$\vec{a}\in H^s$ if and only if
$P_{\lambda^t}(\vec{a})\in H^{s-t}$
with $||\vec{a}||_s=||P_{\lambda^t}(\vec{a})||_{s-t}$.
\end{lem}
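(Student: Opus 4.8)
The plan is to imitate the proof of Lemma \ref{sn3} essentially verbatim, reducing to the scalar case coordinate by coordinate. The key observation is that $\lambda^t(\xi)=(1+|\xi_1|^2+\cdots+|\xi_n|^2)^{t/2}\,\mathrm{id}_E$ is a scalar multiple of the identity matrix, so $P_{\lambda^t}$ acts on $E=(A_{\theta}^{\infty})^re$ simply by applying the ordinary pseudo-differential operator $P_{\lambda^t}$ to each coordinate; in particular it preserves $E$ rather than just $(A_{\theta}^{\infty})^r$, since multiplication by the scalar $\lambda^t(m)$ commutes with right multiplication by the idempotent $e$.

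First I would expand $\vec{a}=\sum_m\vec{a}_m\prod_{g=1}^nU_g^{m_g}$ and invoke the module version of Lemma \ref{opn} to obtain $P_{\lambda^t}(\vec{a})=\sum_m(1+|m_1|^2+\cdots+|m_n|^2)^{t/2}\vec{a}_m\prod_{g=1}^nU_g^{m_g}$. Then I would substitute this into the definition of $\|\cdot\|_{s-t}^2$ and collect the powers of $(1+|m_1|^2+\cdots+|m_n|^2)$: the weight $s-t$ coming from the Sobolev norm multiplies the weight $t$ contributed by $P_{\lambda^t}$, producing exactly the weight $s$, so that
$$\|P_{\lambda^t}(\vec{a})\|_{s-t}^2=\sum_{j,m}(1+|m_1|^2+\cdots+|m_n|^2)^s|a_{j,m}|^2=\|\vec{a}\|_s^2.$$

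Since the two (a priori extended-real-valued) expressions are literally equal, one is finite if and only if the other is, which yields the stated equivalence $\vec{a}\in H^s\iff P_{\lambda^t}(\vec{a})\in H^{s-t}$ together with the equality of norms. Equivalently, identifying $H^s$ with the weighted $\ell^2$-space of coefficient sequences $\{a_{j,m}\}$ with weight $(1+|m_1|^2+\cdots+|m_n|^2)^s$, the operator $P_{\lambda^t}$ is manifestly a unitary isomorphism from $H^s$ onto $H^{s-t}$. There is essentially no obstacle here; the only point deserving a sentence is the passage from $A_{\theta}^{\infty}$ to the completions, handled either by density of $A_{\theta}^{\infty}$ in $H^s$ together with continuity of $P_{\lambda^t}$ and the norm, or — avoiding it entirely — by the identification of $H^s$ with the weighted sequence space just described.
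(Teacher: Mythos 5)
Your proposal is correct and follows essentially the same route as the paper: apply the module version of Lemma \ref{opn} to $P_{\lambda^t}$ componentwise, substitute into the definition of $\|\cdot\|_{s-t}$, and observe that the weight $(1+|m_1|^2+\cdots+|m_n|^2)^{s-t}\lambda^{2t}(m)$ collapses to the weight $(1+|m_1|^2+\cdots+|m_n|^2)^s$, giving equality of the two norms and hence the equivalence of membership. Your additional remarks --- that $\lambda^t$ is a scalar multiple of $\mathrm{id}_E$ so $P_{\lambda^t}$ preserves $E$, and the care about passing to completions via the weighted sequence-space identification --- are sound refinements of the same argument rather than a different one.
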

\begin{proof}
Suppose that $\vec{a}\in H^s$ or $P_{\lambda^t}(\vec{a})\in H^{s-t}$. Then
\begin{align*}
||P_{\lambda^t}(\vec{a})||_{s-t}^2
&= \sum_{j,m}(1+|m_1|^2+\cdots+|m_n|^2)^{s-t}\lambda^{2t}(m)|a_{j,m}|^2 \\
&= \sum_{j,m}(1+|m_1|^2+\cdots+|m_n|^2)^{s-t}(1+|m_1|^2+\cdots+|m_n|^2)^t
|a_{j,m}|^2 \\
&= \sum_{j,m}(1+|m_1|^2+\cdots+|m_n|^2)^s|a_{j,m}|^2 \\
&= ||\vec{a}||_s
\end{align*}
so we know that $\vec{a}\in H^s$ and $P_{\lambda^t}(\vec{a})\in H^{s-t}$.
\end{proof}
Then the general case follows quite easily.
\begin{cor}
Suppose $\rho$ is a matrix valued symbol of order $d$.
Then $||P_{\rho}(\vec{a})||_{s-d}\le C||\vec{a}||_s$ for
some constant $C>0$ and $P_{\rho}$ defines a bounded operator
$P_{\rho}:H^s\rightarrow H^{s-d}$.
\end{cor}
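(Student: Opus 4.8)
The plan is to mimic exactly the argument used for the scalar corollary following Theorem \ref{ws3}, substituting the module-theoretic analogues proved in this section. The three ingredients are: the isometry Lemma \ref{swn4} (which lets me strip off the weight $\lambda^{s-d}$ and reduce to the order-zero estimate), the composition formula of Theorem \ref{mvs2}(b) (which controls the order of a product of two matrix-valued pseudodifferential operators), and the base case Theorem \ref{ws4} (which handles a symbol of order $d$ mapping $H^d$ boundedly into $H^0$).

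Concretely, I would start from an arbitrary $\vec{a}\in E$ and apply Lemma \ref{swn4} with $t=s-d$ to write $||P_{\rho}(\vec{a})||_{s-d}=||P_{\lambda^{s-d}}(P_{\rho}(\vec{a}))||_0$. Next I observe that $\lambda^{s-d}\mathrm{id}_E$ is a (diagonal) matrix-valued symbol of order $s-d$, so by Theorem \ref{mvs2}(b) the composition $P_{\lambda^{s-d}}\circ P_{\rho}$ is again a pseudodifferential operator with matrix-valued symbol $\sigma(P_{\lambda^{s-d}}\circ P_{\rho})$ of order $(s-d)+d=s$. Applying Theorem \ref{ws4} to this operator of order $s$ yields $||P_{\lambda^{s-d}}(P_{\rho}(\vec{a}))||_0\le C||\vec{a}||_s$ for a constant $C>0$ independent of $\vec{a}$. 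Chaining the two displays gives $||P_{\rho}(\vec{a})||_{s-d}\le C||\vec{a}||_s$, and since $A_{\theta}^{\infty}$-module elements are dense in $H^s$ by definition of the Sobolev completion, $P_{\rho}$ extends uniquely to a bounded operator $P_{\rho}\colon H^s\to H^{s-d}$.

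I do not expect any serious obstacle here: every step is a direct invocation of a result already established, and the only point requiring a line of justification is that the asymptotic symbol of the composite indeed has order $s$ in the matrix-valued sense — this follows from Theorem \ref{mvs2}(b) together with the remark there that the argument reduces entrywise to the scalar case, where orders add. The one bookkeeping subtlety worth a sentence is that ``order $s$'' for the composite is read off from the leading term $\lambda^{s-d}\rho$ (order $s$) while all higher terms in the expansion have strictly lower order and hence also satisfy the required $S^s$ bound; this is exactly the content of Theorem \ref{mvs2}(b) and needs no new work.
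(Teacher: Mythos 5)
Your proposal is correct and follows essentially the same route as the paper: Lemma \ref{swn4} to reduce to the $H^0$ norm, Theorem \ref{mvs2}(b) to see that $\sigma(P_{\lambda^{s-d}}\circ P_{\rho})$ has order $s$, and Theorem \ref{ws4} to conclude. The extra remarks about density and about reading the order off the leading term of the expansion are fine but add nothing beyond the paper's argument.
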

\begin{proof}
By Lemma \ref{swn4}, we have
$||P_{\rho}(\vec{a})||_{s-d}=||P_{\lambda^{s-d}}(P_{\rho}(\vec{a}))||_0$.
By Proposition \ref{mvs2}(b), the matrix valued symbol
$\sigma(P_{\lambda^{s-d}}\circ P_{\rho})$ is of order $d+(s-d)=s$,
so Theorem \ref{ws4} gives us
$||P_{\lambda^{s-d}}(P_{\rho}(\vec{a}))||_0\le C||\vec{a}||_s$
for some constant $C>0$.
\end{proof}
We can also define an analog of the $C^k$ norm on $E$.
\begin{define}
Define the $C^k$ norm
$||\cdot||_{\infty,k}:E\rightarrow\mathbb R_{\ge 0}$
as follows:
$$||\vec{a}||_{\infty,k}:=\sum_{|\ell|\le k}||\delta^{\ell}(\vec{a})||_{C^*}$$
where the $C^*$ norm $||\cdot||_{C^*}$ is given by
$$||\vec{a}||_{C^*}^2
:=\sup\{|\lambda|:\vec{a}^*\vec{a}-\lambda\cdot 1\text{ not invertible}\}.$$
\end{define}
Since, for arbitrary $\vec{a}=\sum_{j,m}a_{j,m}\prod_gU_g^{m_g}f_j$,
\begin{align*}
D_s^{\ell}\alpha_s(\vec{a})
&=(-i\partial_s)^{\ell}\sum_{j,m}e^{is\cdot m}a_{j,m}
\prod_{g=1}^nU_g^{m_g}f_j \\
&=\sum_{j,m}m^{\ell}e^{is\cdot m}a_{j,m}\prod_{g=1}^nU_g^{m_g}f_j \\
&=\sum_{j,m}\delta^{\ell}e^{is\cdot m}a_{j,m}\prod_{g=1}^nU_g^{m_g}f_j \\
&=\delta^{\ell}\alpha_s(\vec{a})
\end{align*}
we have $(A_{\theta}^k)^re=C^k$.

We can easily prove an analog of the Sobolev lemma on $E$ as follows.
\begin{thm}
For $s>k+1$, $H^s\subseteq(A_{\theta}^k)^re$.
\end{thm}
\begin{proof}
First consider the case $k=0$.
Note that $||\cdot||_{\infty,0}=||\cdot||_{C^*}$
so for arbitrary $a_{j,m}\prod_{g=1}^nU_g^{m_g}f_j$ we have
$$||a_{j,m}\prod_{g=1}^nU_g^{m_g}f_j||_{\infty,0}^2=\sup\{|\lambda|:
|a_{j,m}|^2-\lambda\cdot 1\text{ not invertible}\}=|a_{j,m}|^2$$
and for arbitrary $\vec{a}=\sum_{j,m}a_{j,m}\prod_{g=1}^nU_g^{m_g}f_j$ we have
$$||\vec{a}||_{\infty,0}^2
\le\sum_{j,m}||a_{j,m}\prod_{g=1}^nU_g^{m_g}f_j||_{\infty,0}^2
=\sum_{j,m}|a_{j,m}|^2
=||\vec{a}||_0^2$$ by the triangle inequality.
We have
$$\vec{a}
=\sum_{j,m}a_{j,m}\lambda^s(m)\lambda^{-s}(m)\prod_{g=1}^nU_g^{m_g}f_j$$
so by the Cauchy-Schwarz inequality we get
$$||\vec{a}||_0^2\le||\vec{a}||_s^2\sum_m(1+|m_1|^2+\cdots+|m_n|^2)^{-s}.$$
Since $2s>2$, $(1+|m_1|^2+\cdots+|m_n|^2)^{-s}$ is summable over
$m\in\mathbb Z^n$
and $j\in\{1,\ldots,r\}$ so $||\vec{a}||_0^2\le C||\vec{a}||_s$. Thus we get
$||\vec{a}||_{\infty,0}\le||\vec{a}||_0\le C||\vec{a}||_s$ and
$H^s\subseteq(A_{\theta}^0)^re$.

Now suppose $k>0$. Using what we've proven for the previous case, we have
\begin{align*}
||\delta^{\ell}(\vec{a})||_{\infty,0}
&\le C||\delta^{\ell}(\vec{a})||_{s-|\ell|} \\
&= C||\sum_{j,m}m^{\ell}a_{j,m}\prod_{g=1}^nU_g^{m_g}f_j||_{s-|\ell|} \\
&< C||\sum_{j,m}(1+|m_1|^2+\cdots+|m_n|^2)^{|\ell|}
a_{j,m}\prod_{g=1}^nU_g^{m_g}f_j||_{s-|\ell|} \\
&= C||P_{\lambda^{|\ell|}}(\vec{a})||_{s-|\ell|} \\
&= C||\vec{a}||_s
\end{align*}
for $|\ell|\le k$ since $s-|\ell|\ge s-k>1$.
Therefore,
$$||\vec{a}||_{\infty,k}=\sum_{|\ell|\le k}||\delta^{\ell}(\vec{a})||_{\infty,0}
\le\sum_{|\ell|\le k}C||\vec{a}||_s
\le C||\vec{a}||_s(k+1)(k+2)/2$$
and we get $H^s\subseteq(A_{\theta}^k)^re$.
\end{proof}
We get the following corollary.
\begin{cor}
$\displaystyle\bigcap_{s\in\mathbb R}H^s=(A_{\theta}^{\infty})^re$.
\end{cor}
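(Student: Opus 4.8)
The proof will mirror the scalar corollary $\bigcap_{s\in\mathbb R}H^s=A_{\theta}^{\infty}$ almost verbatim, now invoking the module version of the Sobolev lemma just established, namely $H^s\subseteq(A_{\theta}^k)^re$ for $s>k+1$.

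First I would handle the inclusion $\bigcap_{s\in\mathbb R}H^s\subseteq(A_{\theta}^{\infty})^re$. Given $\vec a\in\bigcap_{s\in\mathbb R}H^s$, fix an arbitrary $k\in\mathbb Z_{\ge 0}$; since $\vec a\in H^{k+2}$ and $k+2>k+1$, the Sobolev lemma on $E$ gives $\vec a\in(A_{\theta}^k)^re$. Letting $k$ range over all of $\mathbb Z_{\ge 0}$, we obtain $\vec a\in\bigcap_{k\ge 0}(A_{\theta}^k)^re$, and it remains to identify this intersection with $(A_{\theta}^{\infty})^re$.

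For the reverse inclusion I would simply note that, by definition, each $H^s$ is the completion of $E=(A_{\theta}^{\infty})^re$ with respect to $||\cdot||_s$, so $(A_{\theta}^{\infty})^re\subseteq H^s$ for every $s\in\mathbb R$, and therefore $(A_{\theta}^{\infty})^re\subseteq\bigcap_{s\in\mathbb R}H^s$.

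The only step requiring any care — and the closest thing to an obstacle, although it is entirely routine — is the identity $\bigcap_{k\ge 0}(A_{\theta}^k)^re=(A_{\theta}^{\infty})^re$. The inclusion $\supseteq$ is immediate since $A_{\theta}^{\infty}\subseteq A_{\theta}^k$ for all $k$. For $\subseteq$, suppose $\vec a\in(A_{\theta}^k)^re$ for every $k$. Then $\vec a e=\vec a$ because $e$ is idempotent, and $\vec a\in(A_{\theta}^k)^r$ for every $k$ because $(A_{\theta}^k)^re\subseteq(A_{\theta}^k)^r$; hence $\vec a\in\bigl(\bigcap_{k\ge 0}A_{\theta}^k\bigr)^r=(A_{\theta}^{\infty})^r$, and multiplying on the right by the fixed idempotent $e\in M_r(A_{\theta}^{\infty})$ gives $\vec a=\vec a e\in(A_{\theta}^{\infty})^re$. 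Combining the two inclusions completes the proof.
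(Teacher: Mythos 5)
Your proof is correct and follows essentially the same route as the paper: the Sobolev lemma on $E$ applied with $s=k+2$ for each $k$ gives one inclusion, and the completion definition of $H^s$ gives the other. The only difference is that you explicitly verify $\bigcap_{k\ge 0}(A_{\theta}^k)^re=(A_{\theta}^{\infty})^re$ via the idempotent $e$, a detail the paper passes over with ``consequently''; this is a welcome bit of extra care but not a change of approach.
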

\begin{proof}
Suppose $a\in\bigcap_{s\in\mathbb R}H^s$. Then for any
$k\in\mathbb Z_{\ge 0}$, $\vec{a}\in H^{k+2}$, so by the theorem we just proved,
$\vec{a}\in(A_{\theta}^k)^re$.
Consequently $\vec{a}\in(A_{\theta}^{\infty})^re$, so
$\bigcap_{s\in\mathbb R}H^s\subseteq(A_{\theta}^{\infty})^re$.

Suppose $a\in(A_{\theta}^{\infty})^re$. Then since
$H^s$ is the completion of $(A_{\theta}^{\infty})^re$
with respect to $||\cdot||_s$,
$(A_{\theta}^{\infty})^re\subseteq H^s$ for all $s\in\mathbb R$, and
$(A_{\theta}^{\infty})^re\subseteq\bigcap_{s\in\mathbb R}H^s$.
\end{proof}
We can also prove an analog of the Rellich lemma on $E$.
\begin{thm}
Let $\{\vec{a}_N\}\in(A_{\theta}^{\infty})^re$ be a sequence. Suppose that there
is a constant $C$ so that $||\vec{a}_N||_s\le C$ for all $N$. Let $s>t$. Then
there is a subsequence $\{\vec{a}_{N_j}\}$ that converges in $H^t$.
\end{thm}
\begin{proof}
Let $F:=\{f_j:1\le j\le r\}$ be a set of eigenvectors of $e$ normalized
with respect to $\langle\cdot,\cdot\rangle$, where $f_j$ has corresponding
eigenvalue $\lambda_j$ for $1\le j\le r$.
Let $e_{s,m}:=(1+|m_1|^2+\cdots+|m_n|^2)^{-s/2}\prod_gU_g^{m_g}$ and
$E_s:=\{e_{s,m}\mid n,m\in\mathbb Z\}$.
$E_sF$ is an orthonormal basis
with respect to $\langle\cdot,\cdot\rangle_s$, so we can write
$\vec{a}_N:=\sum_{h,k}a_{N,h,k}e_{s,k}f_h$. Then
$$|a_{N,h,k}|^2\le\sum_{h,k}|a_{N,h,k}|^2\le C^2$$
and $|a_{N,h,k}|\le C$.
Applying the Arzela-Ascoli theorem to $\{a_{N,h,k}\}$ for some fixed
$(h,k)$, we can get a subsequence
$\{a_{N_j,h,k}\}$ of $\{a_{N,h,k}\}$
such that for any $\epsilon>0$ there exists $M(\epsilon)\in\mathbb N$
such that $|a_{N_i,h,k}-a_{N_j,h,k}|<\epsilon$ whenever
$i,j\ge M(\epsilon)$.
Do this for all $1\le h\le r$ and $|k_1|^2+\cdots+|k_n|^2\le R$,
replacing $\{a_N\}$ with $\{a_{N_j}\}$
each time. Then we get a subsequence $\{a_{N_j}\}$ of $\{a_N\}$
such that for any $\epsilon>0$ there exists $M(\epsilon)\in\mathbb N$
such that, for all $1\le h\le r$ and $|k_1|^2+\cdots+|k_n|^2\le R$,
$|a_{N_i,h,k}-a_{N_j,h,k}|<\epsilon$ whenever $i,j\ge M(\epsilon)$.
Now consider the sum $$||a_{N_i}-a_{N_j}||_t^2
=\sum_{h,k}|a_{N_i,h,k}-a_{N_j,h,k}|^2(1+|k_1|^2+\cdots+|k_n|^2)^{t-s}.$$
Decompose it into two parts: one where $|k_1|^2+\cdots+|k_n|^2>R^2$ and one
where $|k_1|^2+\cdots+|k_n|^2\le R^2$.
On $|k_1|^2+\cdots+|k_n|^2>R^2$ we estimate
$$(1+|k_1|^2+\cdots+|k_n|^2)^{t-s}<(1+R^2)^{t-s}$$ so that
\begin{align*}
\sum_h\sum_{|k_1|^2+\cdots+|k_n|^2\ge R^2}|a_{N_i,h,k}-a_{N_j,h,k}|^2
(1+|k_1|^2+\cdots+|k_n|^2)^{t-s}
&<(1+R^2)^{t-s}\sum_{h,k}|a_{N_i,h,k}-a_{N_j,h,k}|^2 \\
&\le2rC^2(1+R^2)^{t-s}.
\end{align*}
If $\epsilon>0$ is given, we choose $R$ so that $2rC^2(1+R^2)^{t-s}<\epsilon$.
The remaining part of the sum is over $|k_1|^2+\cdots+|k_n|^2\le R^2$ and
can be bounded above by
$\epsilon':=\epsilon-2rC^2(1+R^2)^{t-s}$ if
$i,j\ge M(\sqrt[n]{\epsilon'}/(2R+1))$
because a ball of radius $R$ centered at the origin is contained in a cube
of side length $2R$ that has $(2R+1)^2$ lattice points.
Then the total sum is bounded above by $\epsilon$, and we are done.
\end{proof}
\section{Acknowledgement}
I would like to thank Farzad Fathizadeh for suggesting the problem of
filling in the details in the pseudodifferential calculus on the noncommutative
$n$ torus. I would also like thank Matilde Marcolli for helping me make plans
to take my candidacy exam. I would like to thank Vlad Markovic and Eric Rains
for agreeing to be on my candidacy exam committee.

\bibliography{paper}{}
\bibliographystyle{plain}
\end{document}